\author[De Leenheer]{Patrick De Leenheer}
\address{University of Florida, PO Box 118105, Gainesville, 
  FL~32611--8105.}
\email{deleenhe@ufl.edu}
\urladdr{http://www.math.ufl.edu/~deleenhe/}
\author[Gopalakrishnan]{Jay Gopalakrishnan}
\address{Portland State University, PO Box 751, Portland, OR~97207--0751.}
\email{gjay@pdx.edu}
\urladdr{http://web.pdx.edu/~gjay/}
\author[Zuhr]{Erica Zuhr}
\address{University of Florida, PO Box 118105, Gainesville, 
  FL~32611--8105.}
\email{ericaz@math.ufl.edu}
\urladdr{http://www.math.ufl.edu/~ericaz/}
\title[Pattern formation]{Instability in a generalized Keller-Segel Model}
\thanks{This work was partially supported by the NSF under grants
  DMS-0818050, DMS-1014817 and DMS-1211635. PDL wishes to thank the VLAC, the Flemish Academic Centre for Science and the Arts, for hosting and supporting him during a sabbatical leave from the University of Florida, the Universit{\'e} Catholique de Louvain-la-Neuve for awarding him a  visiting professorship in the fall of 2011, and the University of Florida for a Faculty Enhancement Opportunity (FEO) fund.}
\newtheorem{theorem}{Theorem}[section]
\newtheorem{lemma}[theorem]{Lemma}
\newtheorem{corollary}[theorem]{Corollary}
\newtheorem{proposition}[theorem]{Proposition}
\theoremstyle{definition}
\newtheorem{definition}[theorem]{Definition}
\newtheorem{example}[theorem]{Example}
\theoremstyle{remark}
\newtheorem{remark}[theorem]{Remark}
\begin{document}

\newcommand{\com}[1]{{#1}}
\newcommand{\RRR}{\mathbb{R}}
\newcommand{\RNN}{\mathbb{R}^{N\times N}}
\newcommand{\vx}{\vec{x}}
\newcommand{\vy}{\vec{y}}
\newcommand{\vv}{\vec{v}}
\newcommand{\vV}{\vec{V}}
\newcommand{\vg}{\vec{g}}
\newcommand{\valpha}{\vec{\alpha}}
\newcommand{\vkappa}{\vec{\kappa}}
\newcommand{\vkappat}{\vec{\kappa}^{\,t}}
\newcommand{\vs}{\vec{v}_*}
\newcommand{\vsi}[1]{\vec{v}_*^{\,({#1})}}
\newcommand{\vsii}[1]{v_{*,{{#1}}}}
\newcommand{\us}{u_*}
\newcommand{\ve}{\vec{e}}
\newcommand{\Frechet}{Fr{\'{e}}chet}
\newcommand{\grad}{\nabla}

\def\d{\partial}

\begin{abstract}
  We present a generalized Keller-Segel model where an
arbitrary number of chemical compounds react, some of which are
  produced by a species, and one of which is a chemoattractant for the
  species. To investigate the stability of homogeneous stationary
  states of this generalized model, we consider the eigenvalues of a
  linearized system.  We are able to reduce this infinite dimensional
  eigenproblem to a parametrized finite dimensional eigenproblem. By
  matrix theoretic tools, we then provide easily verifiable sufficient
  conditions for destabilizing the homogeneous stationary states. In
  particular, one of the sufficient conditions is that the
  chemotactic feedback is sufficiently strong. Although this mechanism was already known to exist in the original Keller-Segel model, 
  here we show that it is more generally applicable by significantly enlarging the class of models exhibiting this 
  instability phenomenon which may lead to pattern formation.
\end{abstract}

\keywords{chemotaxis, Keller-Segel, steady state, stability, pattern formation, chemical reaction network, Turing instability}

\subjclass[2010]{92B05, 92C45, 92C15}

\maketitle

\section{Introduction}

Pattern formation is a fascinating area of biology with many
unanswered questions. An early example is furnished by morphogenesis,
which focuses on the following question: How can an initially
spherically symmetric system, such as an embryo in early stages of
development, lead to a creature as spherically asymmetric as a person,
or an animal with a spotted coat?  One of the first attempts to
theoretically explain morphogenesis (by means of a mathematical model)
is by Alan Turing. In~\cite{turing}, he proposed that an
instability mechanism in specific systems of chemical
reaction-diffusion systems is the underlying cause of morphogenesis
by showing that under certain conditions on both the reaction and
diffusion terms, some such systems which are stable {\em without}
diffusion are destabilized {\em with} diffusion~\cite{maini}.  Over
time, pattern formation in various biological systems has been
attributed to such Turing instabilities in systems of
reaction-diffusion equations.

The main goal of this paper is to present an alternate avenue
potentially leading to pattern formation via chemotaxis.  
The key idea of this mechanism is already present in the 
well-known Keller-Segel model~\cite{ks} (reviewed below), as elucidated by Schaaf~\cite{schaaf}. It relies on the destabilization via a chemotactic term 
of an otherwise stable uniform steady state. \com{This mechanism of destabilization based on the chemotactic sensitivity is also analyzed for the case of the two equation system in one spatial dimension in~\cite{wangxu2012}.} In this paper, we will identify a larger class of systems \com{in higher spatial dimensions} where 
a similar chemotactic destabilization mechanism may lead to pattern formation.

Chemotaxis
occurs when the movement of a species is influenced by chemicals in
the environment~\cite{horstmann}. It naturally arises in a wide
variety of interesting biological settings, including cancer
metastasis, angiogenesis, immune system function and egg
fertilization~\cite{hillen}.  Considering reaction diffusion systems
with chemotactic terms, we show that there are quite general classes
of chemical reaction networks (CRNs) which can be destabilized by
means of an inherent feedback mechanism.  The feedback arises as
follows. A population of cells produces a chemical, which in turn
participates in a CRN. The CRN leads to one or more chemical products.
One of these products serves as the chemotactic signal for the cells,
thereby completing the feedback loop.  Under certain conditions this
feedback loop has unstable homogeneous steady states which, in the
absence of the feedback, would be stable. The destabilization of
homogeneous stationary states, caused by the feedback mechanism built into the system, may give rise to 
pattern formation. As mentioned earlier, such a mechanism is already present in a simple CRN
appearing in the well-known Keller-Segel model~\cite{ks}. 
In this model, the signal emitted by the cells
is the same as the chemotactic signal, and it is subject only  to
decay. Since this CRN consists of a single decay reaction, it is
perhaps one of the simplest possible CRNs.  In this paper, we find a
larger class of CRNs with destabilizable homogeneous steady states.

To further explain the difference with the traditional destabilization
mechanism, let us first review Turing's diffusion driven instability 
for a two-species system 
(generalizations to multi-species systems are also well-known),
as explained in~\cite{murray}, using a reaction-diffusion system of
the form
\begin{subequations} \label{eq:1}
\begin{align}\label{ex1}
u_t & = d_1 \Delta u + f(u,v), \qquad & x \in \Omega, \quad t>0 \\
\label{ex2}
v_t & = d_2 \Delta v + g(u,v), \qquad & x \in \Omega, \quad t>0 \\
\label{ex3}
& \frac{\partial u}{\partial n} = \frac{\partial v}{\partial n} = 0, \qquad & x \in \partial \Omega, \quad t>0.
\end{align}
\end{subequations}
where $d_1$ and $d_2$ are positive diffusion coefficients.  Here $u_t$
denotes $\partial u/\partial t$ and $n$ (in $\partial /\partial n =
n\cdot \nabla$) denotes the outward unit normal on the smooth domain boundary
$\d\Omega$.  Assume the existence of a homogeneous steady state
$(u^*,v^*)$ of~\eqref{eq:1}.  When the diffusion terms are ignored, we
obtain the ordinary differential equation system corresponding to just
the reaction terms:
\begin{subequations}\label{eq:ode1}
\begin{eqnarray}
\frac{du}{dt} =f(u,v), \qquad &  t>0 \\
\frac{dv}{dt} = g(u,v), \qquad & t>0.
\end{eqnarray}
\end{subequations}
We note that $(u^*,v^*)$ is also a steady state of~\eqref{eq:ode1} and
we assume that it is linearly stable, i.e., we assume that 
the Jacobian matrix
\[
J =
\begin{bmatrix}
f_u(u^*,v^*) & f_v(u^*,v^*) \\
g_u(u^*,v^*) & g_v(u^*,v^*)
\end{bmatrix}
\]
has eigenvalues with negative real part. (Again, derivatives are indicated by subscripts.)

The question Turing asked is whether $(u^*,v^*)$, considered as a
steady state of~\eqref{eq:1}, can be linearly unstable, even if it is a
stable steady state of~\eqref{eq:ode1}.  In other words, is it
possible that the eigenvalue problem arising from
linearizing~\eqref{eq:1} at $(u^*,v^*)$, namely
\begin{eqnarray*}
d_1 \Delta U + f_u(u^*,v^*)U + f_v(u^*,v^*)V =& \lambda U \\
d_2 \Delta V + g_u(u^*,v^*)U + g_v(u^*,v^*)V =& \lambda U \\
\frac{\d U}{\d n} = \frac{\d V}{\d n}  =& 0,
\end{eqnarray*}
has an eigenvalue $\lambda$ with positive real part?  It turns out
that a first necessary condition for this to happen is that $d_1\neq
d_2$, implying that both species must move with different diffusion constants. By an appropriate scaling we may assume, without loss of
generality, that $d_1=1$, and then a second necessary condition is
that $J$ has one of the sign patterns
\begin{equation}\label{spat}
\begin{bmatrix} + & - \\ + & -\end{bmatrix}, \quad \begin{bmatrix} - & + \\ - & +\end{bmatrix}, \quad
\begin{bmatrix} + & + \\ - & -\end{bmatrix}, \quad \text{or} \quad \begin{bmatrix} - & - \\ + & +\end{bmatrix},
\end{equation}
with additional restrictions on $d_2$ and $\Omega$. The relevance of this sign pattern is that it imposes certain restrictions on how 
the species interact. For example, the first matrix implies that the first species activates itself and the second species, and 
that the second species inhibits itself,  as well as the first species. \com{A generalization of Turing-type instabilites to a system modeling an arbitrary number of chemicals can be found in~\cite{satnoianu2000}.}

The model that we propose for pattern formation on the other hand, is a fully nonlinear
reaction-diffusion system which also contains a nonlinear chemotaxis term.
Destabilization of homogeneous steady states in our model does {\em not}
require any restrictions on the diffusion coefficients, although we do
impose some conditions on the sign pattern of the Jacobian
corresponding to some of the reaction terms. These may however be less
crucial than the role played by the chemotactic term.

The Keller-Segel model for chemotaxis is one of the
most widely studied models. It was developed focusing on the cellular
slime mold \textit{Dictyostelium discoideum}. As explained in
\cite{horstmann} and \cite{ks}, the derivation of the Keller-Segel
model is based on the aggregation stage of \textit{D. discoideum}'s
life cycle. The unicellular slime mold grows by cell division until it
depletes its food source, and begins to enter a starvation mode. After
some time, one cell will emit a signal of cyclic Adenosine
Monophosphate (cAMP). The other cells are chemotactically attracted to
this signal, and in turn begin to emit cAMP themselves. The cells also
produce an enzyme which degrades the cAMP by first binding to it and
forming a complex which in turn breaks up into the enzyme plus what we
call a ``degraded product''.  To model this, the following assumptions
are made in~\cite{horstmann} and~\cite{ks}:

\begin{itemize}

\item Let $u(x,t)$ denote the cell density of the slime mold, $v(x,t)$
  the density of the chemoattractant cAMP, $\eta(x,t)$ the density of the
  enzyme, $c(x,t)$ the density of the complex formed by the binding
  of cAMP and the enzyme, and $d(x,t)$ denote the density of the degraded product.

\item The chemoattractant is produced by the amoeba at a rate $f(v)$
  per amoeba and the enzyme is produced at a rate 
  $g(v,\eta)$ per amoeba, which, according to \cite{ks} 
  is allowed to vary with both the chemoattractant and enzyme concentrations.

\item The chemoattracant, enzyme and complex react as shown below.
  \[
  v + \eta \;\longleftrightarrow\; c \;\longrightarrow\;
  \eta +  d.
  \]
  We assume that the reaction rates proceed according to the law of mass
  action. Accordingly, let the positive rate constants be denoted by
  $r_1$ (for the reaction $v+\eta \rightarrow c$), $r_{-1}$ (for the
  reaction $c \rightarrow v+\eta$) and $r_2$ (for the reaction $c
  \rightarrow \eta + d$).

\item The cAMP, the enzyme and the complex all diffuse according to
  standard Fick's law.

\item The cell concentration changes by random diffusion, as well by
  chemotaxis, due to the cells ``climbing the gradient'' of the
  chemoattractant cAMP.

\end{itemize}
The last assumption is again modeled using Fick's law of diffusion,
but with the amoebic flux $J^{(u)}$ consisting of a purely diffusive
part and a chemotactic contribution, namely
\[
J^{(u)}(x,t) = k_2 \nabla v - k_1 \nabla u,
\] 
for some chemotactic sensitivity function $k_2(u,v)$ and diffusion
constant $k_1$. These assumptions lead to the full Keller-Segel
model~\cite{horstmann}  on a bounded domain $\Omega$ for time $t > 0$:
\begin{subequations}
  \label{eq:FullKS}
\begin{align} \label{genks1}
		u_{t} &= \nabla \cdot (k_1 \nabla u - k_2(u,v)\nabla v), & x  \in  \Omega, \quad t>0 \\
		\label{genks2}
		v_{t} &= k_v \Delta v - r_1 v \eta + r_{-1} c + u f(v),  & x  \in  \Omega, \quad t>0\\
		\label{genks3}
		\eta_{t} &= k_{\eta} \Delta \eta -r_1 v \eta +(r_{-1}+r_2) c + u g(v,\eta),  & x \in  \Omega, \quad t>0\\
		\label{genks4}
		c_{t} &=  k_{c} \Delta c + r_1 v \eta - (r_{-1}+r_2)c,  & x \in  \Omega, \quad t>0 \\
		\label{genks5}
		& \frac{\partial u}{\partial n} = \frac{\partial v}{\partial n} = \frac{\partial \eta}{\partial n} =\frac{\partial c}{\partial n} = 0, & x \in \partial \Omega, \quad t>0,
\end{align}
\end{subequations}
Here, $k_v,k_{\eta}$ and $k_{c}$ denote the positive diffusion constants for cAMP, enzyme and complex respectively.

In~\cite{ks} Keller and Segel use a steady state assumption to reduce
the above set of four equations to a two-equation system which models
only the density of the chemoattractant and the cell density. Further
simplifications result in the so-called~\cite{hillen,horstmann}
``minimal system'',
\begin{subequations}
  \label{eq:minimalKS}
  \begin{align}\label{min1}
		u_{t} &= \nabla \cdot (\nabla u - \chi u \nabla v), & x  \in  \Omega, \quad t>0 \\ 
		\label{min2}
		v_{t} &= k_v \Delta v - \gamma v + \alpha u,  & x  \in  \Omega, \quad t>0\\
		\label{min3}
		& \frac{\partial u}{\partial n} = \frac{\partial v}{\partial n} = 0, & x \in \partial \Omega, \quad t>0 \\
		\label{min4}
		& u(0,x) = u_0(x), \quad v(0,x)=v_0(x), & x \in \Omega.
  \end{align}		
\end{subequations}
This system has been studied extensively, and there are many results
on local and global existence, positivity, and blow-up of solutions as
well as steady
states~{\cite{perthame-book,hillen,horstmann,schaaf}}.
Solutions that exhibit blow-up, defined as solutions for which the
$L^{\infty}$-norm of $u$ or $v$ becomes unbounded in either finite or
infinite time, have attracted considerable interest and are reviewed
extensively in~\cite{horstmann}.  (The blow-up phenomenon, restricted
to a simplified Keller-Segel model, is also featured
in~\cite[Ch.~5]{perthame-book}, which nicely illustrates the
difficulties encountered when dealing with this complex issue in an
elementary setup.)
One significant result is that in two
dimensions, if $\alpha \chi \int_{\Omega} u_0(x)dx < 4 \pi k_v$ then
the solution to~\eqref{eq:minimalKS} exists globally in time and its
$L^{\infty}$-norm is uniformly bounded for all time~\cite[Table
5]{horstmann}. On the other hand if $4 \pi k_v < \alpha \chi
\int_{\Omega} u_0(x) dx < 8 \pi k_v$, then there exist initial data
$(u_0,v_0)$ for which the solution of~\eqref{eq:minimalKS} blows up at
the boundary of $\Omega$ in finite or infinite time~\cite[Table
5]{horstmann}. Although the question of blow-up of solutions for our
generalization of the Keller-Segel model is interesting, we do not
address that issue in this paper. Instead, here we focus exclusively
on destabilization of homogeneous steady states.

{Our work fits in the context of the more recent attempts to investigate multi-species, multi-chemical systems with chemotactic terms 
\cite{perthame-paper,espejo,fasano,smith,wolansky,horstmann11,liu2012}. Just as in the case of the Keller-Segel model, the 
blow-up phenomenon in these generalized models has received considerable attention \cite{wolansky,perthame-paper,espejo,horstmann11}. 
Of particular interest to the issue we address here, is the recent work by Fasano et al \cite{fasano}, Horstmann \cite{horstmann11},  \com{and Liu et al \cite{liu2012}}. 
All papers consider the existence of non-homogeneous or periodic steady states, a signature of pattern formation for multi species models.
In \cite{fasano}, a two-species two-chemical system is proposed where each species produces one of the two chemicals, 
which in turn serve as attractive chemotactic signals for the species that produce them. In addition however, these chemicals are also chemotactic repellants for the species that do not produce them. The model is analyzed for scalar spatial domains, and focuses on the existence of nonuniform and periodic  
steady state patterns. Motivated in part by \cite{fasano}, Horstmann in \cite{horstmann11} proposed an $n$ species, $m$ chemicals model 
where all chemicals are allowed to serve as chemotactic signals for the species, and the species themselves can chemotactically attract or repel 
each other. In addition, the chemicals may be produced and/or consumed by the species, and in principle, could react with each other in fairly 
general ways. Several natural questions that have attracted the attention of several researchers in the context of the Keller-Segel model, are 
addressed in \cite{horstmann11} for the multi-species, multi-chemical models. They include blow-up, global existence of solutions, existence and non-existence of nonuniform steady states, destabilization of uniform steady states as a way to achieve pattern formation, 
and the existence of Lyapunov functionals. 
Although~\cite{horstmann11} begins with this very general model, the focus of the paper quickly turns to 
specific cases, motivated by the literature, where only a limited number of non-reacting chemicals (often just one or two) appear in the model. One 
notable exception is in Section 3.1 of \cite{horstmann11}, where a one-species, two chemical system is proposed, where the two chemicals 
react in a nontrivial way. The model is inspired by work of \cite{boon} where an {\it E.~coli} population is studied in the presence of two 
chemotactic attractants, namely glucose and oxygen. \com{In~\cite{liu2012} a system with two chemical agents is also considered, but in this case one is a chemoattractant while the other acts as a chemorepellent.}
In contrast, the emphasis of the work presented here, is on possible pattern formation in models with several chemicals, which react 
according to fairly general reaction networks, but only a single species that chemotactically responds to one of these chemicals.}

In Section~\ref{sec:general-model} of this article, we explain our
generalized model and the system of partial differential equations
which arises from it. In Section~\ref{sec:homog}, we discuss
conditions for the existence of steady state solutions to the system
in a special case. In Section~\ref{sec:instab} we state our main
results, which give sufficient conditions under which steady states
are unstable. Section~\ref{sec:examples} gives examples of theoretical
biological systems which meet the conditions of our theorems and also
provides a motivating example for generalizing our results.

\section{The generalized model} \label{sec:general-model}

In this section we present a generalization of the Keller-Segel
model~\eqref{eq:minimalKS} where the modeled species interacts with
several chemical compounds.

As before, consider a population of some species occupying $\Omega$,
an open bounded connected $n$-dimensional domain. Let the population
density at a point $x$ in $\Omega$ and time $t$ be $u(x,t)$.  The population climbs
the gradient of a chemical of concentration $v_N$. To model the more
general and the more likely biological scenario, where this chemical
is in reaction with several other compounds in the environment, we
introduce the concentrations of $N-1$ other chemicals, denoted by
$v_1,v_2, \ldots, v_{N-1}$ (see Figure~\ref{fig:sch}).  These
chemicals can interact with each other as well as with $v_N$ creating
a CRN.  Writing $\vec{v} = [v_1, \ldots, v_N]^t$, we model the
rate of change of $\vec{v}$ due to the CRN by $\vec{g}(\vec{v})$ for
some function $\vec{g}: \mathbb{R}^{N} \to \mathbb{R}^{N}$.
In applications,
this function $\vec{g}$ is often determined by mass action
kinetics. Note that in general $\vec g$ includes decay of the
chemicals.

Some of these compounds can also be produced by the species.  To model
this scenario, consider a general subset $S \subseteq \{1,2, \ldots,
N\}$ and assume that the species produce the chemical $v_l$ at a rate
$\alpha_l \geq 0$ for all $l \in S$.  The remaining compounds are not
generated by the species, so we set $\alpha_l=0$ for $l\not\in S$. Let
$\valpha \in \mathbb{R}^N$ be the vector whose $i$th component is the
$\alpha_i$ introduced above.  Thus, the rate of change of $\vec v$
would be governed by $\vec v_t = \valpha u + \vec g (\vec v)$, in the
absence of any diffusive effects.

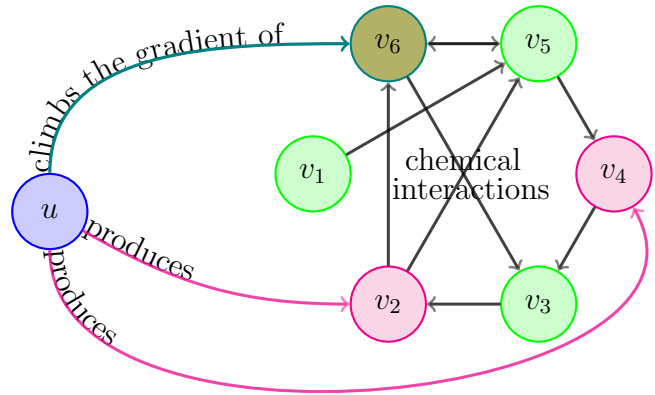
\begin{figure}[h]
  \centering
  \begin{tikzpicture} 
    [produce/.style={black, opacity=0.5, very thick},
    species/.style ={circle,minimum size=1cm,draw=blue,fill=blue!20,thick}, 
    compound0/.style={circle, minimum size=1cm,draw=green!50!blue,fill=red!50!green!60,thick},
    compound1/.style={circle, minimum size=1cm,draw=green,fill=green!20,thick},
    compound2/.style={circle, minimum size=1cm,draw=magenta,fill=magenta!20,thick}]
     
    %

     \node at (  0:2) [compound2] (v4)  {$v_4$}; 
     \node at ( 60:2) [compound1] (v5)  {$v_5$}; 
     \node at (120:2) [compound0] (v6)  {$v_6$}; 
     \node at (180:2) [compound1] (v1)  {$v_1$}; 
     \node at (240:2) [compound2] (v2)  {$v_2$}; 
     \node at (300:2) [compound1] (v3)  {$v_3$}; 

     \node at (-5.5,-0.5) [species]  (u)  {$u$};

     \draw [->,green!50!blue,very thick, out=90, in=180,preaction={decorate,
       decoration={text along path,text={|\rmfamily|climbs the gradient of}}}]
     (u)  to (v6);

    \draw [->,produce,magenta,out=-30,in=180, preaction={decorate,
            decoration={text along path,text={|\rmfamily|produces}}}]
          (u)  to (v2);
     \draw [->,produce,magenta,out=-90,in=-60,preaction={decorate,
            decoration={text along path,text={|\rmfamily|produces}}}]         
          (u)  to  (v4);

     \draw[->,produce] (v1) -- (v5);
     \draw[->,produce] (v6) -- (v3);
     \draw[->,produce] (v2) -- (v5);
     \draw[->,produce] (v4) -- (v3);
     \draw[->,produce] (v5) -- (v4);
     \draw[->,produce] (v3) -- (v2);
     \draw[->,produce] (v2) -- (v6);
     \draw[<->,produce] (v6) -- (v5);
     
     \node at (0,0.2)  {chemical};
     \node at (0.1,-0.2) {interactions};
  \end{tikzpicture}
  \caption{A schematic of the species $(u)$ and the chemicals
    ($v_i$'s) interacting.}
  \label{fig:sch}
\end{figure}

However, we also want to account for spatial diffusion. In general,
both the species population and the chemical concentrations diffuse.
Assume that the diffusion coefficient for $u$ is given by the constant
$D>0$, and the diffusion coefficients for each $v_i$ are given by the
constants $\tilde{D}_i>0$. Let $\tilde{D}$ be the diagonal matrix
whose $i$th diagonal entry is $\tilde{D}_i$. Assuming that the
chemotactic sensitivity function is $\chi u$, for some constant $\chi
> 0$, we arrive at our generalized model:
\begin{subequations}
  \label{eq:GKS}
\begin{align}                   \label{gen1}
  u_t 
  &= \nabla \cdot ( D \nabla u - \chi u \nabla v_N) 
  && x \in \Omega, \quad t >0, 
  \\             \label{gen2}
  \vec{v}_t 
  & = \tilde{D} \Delta \vec{v} +\valpha u + \vec{g}(\vec{v}) 
  && x\in \Omega, \quad t >0,
  \\            \label{gen3}
  u(x,0) & = u_0(x), \qquad \vec{v}(x,0) = \vec{v}_0(x) 
  && x \in \Omega, 
  \\ \label{gen4}
  \frac{\partial u(x,t)}{\partial n} 
  & = \frac{\partial v_i(x,t)}{\partial n} = 0 
  && x \in \partial \Omega,\quad t \geq 0, \quad  1 \leq i \leq N.
\end{align}
\end{subequations}
Here we have assumed zero Neumann boundary conditions. The initial
conditions $u_0$ and $v_0$ are assumed to be nonnegative functions
on $\Omega$. \com{Notice that upon integrating~\eqref{gen1} over the domain and applying the divergence theorem and boundary conditions,  we have
\begin{align*}
\int_{\Omega} u_t \, dx &= \int_{\Omega} \nabla \cdot (D \nabla u - \chi u \nabla v_N) \, dx \\
&= \int _{\d \Omega} (D \nabla u - \chi u \nabla v_N) \cdot n \, dS \\
&=0.
\end{align*}
Hence if $u$ is sufficiently smooth we may interchange the integral with the differentiation with respect to time to conclude that the total population of the amoeba inside the domain is constant in time.}

As an immediate example of an application of our generalized model, consider the unreduced Keller-Segel model~\eqref{eq:FullKS} with the chemotactic sensitivity function $k_2(u,v) = \chi u$ and the rates $f(v)$ and $g(v,\eta)$ assumed to be constant. We change the notation to let $v_1$ denote the density of the emitted enzyme, $v_3$ the density of the chemoattractant cAMP and $v_2$ the density of complex formed from the cAMP and the enzyme. Then setting
\[
\vg(\vv) = \begin{bmatrix}
- r_1 v_1 v_3 + (r_{-1} + r_2)v_2 \\
r_1 v_1 v_3 - (r_{-1}+r_2)v_2 \\
- r_1 v_1 v_3 + r_{-1} v_2 \\
\end{bmatrix},
\]
and
\[
\alpha = \begin{bmatrix} \alpha_1 \\ 0 \\ \alpha_3 \end{bmatrix}
\]
for constant rates $\alpha_i > 0$, we recover~\eqref{eq:FullKS} in the framework of our model.


In the following sections, we will examine the constant stationary
states of the model~\eqref{eq:GKS}. We will be especially concerned
with conditions for the destabilization of such constant states.

\section{Existence of homogeneous steady states}   \label{sec:homog}

A steady state $(\us,\vs)$ of our generalized model~\eqref{eq:GKS}
satisfies
\begin{subequations}
  \label{eq:ss}
\begin{eqnarray}\label{ssa}
D \Delta \us - \nabla \cdot (\chi \us \nabla [\vs]_N) &=& 0, 
\quad x \in \Omega,
\\ 
\label{ssb}
\tilde{D} \Delta\vs +\vec{\alpha} \us + \vec{g}(\vs) &=& 0, 
\quad x \in \Omega, \\
\label{ssc}
\frac{\d \us}{\d n} = \frac{\d \vs}{\d n} &=& 0, \quad x \in \d \Omega.
\end{eqnarray}
\end{subequations}
If the steady state is homogeneous, i.e., if it consists of spatially 
constant functions, then~\eqref{ssc} obviously holds, and 
\eqref{ssa}--\eqref{ssb} reduces to
\begin{equation}\label{eq:ss_condition}
\vec{\alpha} \us + \vec{g}(\vs) = 0.
\end{equation}
\com{Recalling that the total population of $u$ is conserved, we 
note that the initial conditions will predetermine the value of $u_*$.} A basic question is whether such nonnegative constant steady states
$(\us,\vs)$ exist.  In this section, we will answer this question in
the affirmative, assuming a certain linearity condition on~$\vg$.

Before we do so, we need to introduce some terminology from matrix
theory that we will use throughout.  For any square matrix $A$, let
$\rho(A)$ denote the spectral radius of $A$. We write $A \geq 0$ for a
real matrix (or vector) $A$ if each entry of $A$ is
nonnegative. Similarly $A>0$ signifies that every entry of $A$ is
positive.

\begin{definition}
  A {\em Metzler matrix} $A \in \RNN$ is a matrix having nonnegative
  off-diagonal entries, i.e.,
  \[
  A_{ij} \geq 0, \qquad i \neq j.
  \]
\end{definition}

We will also need $M$-matrices, which can be defined in many
equivalent ways. Following~\cite{mmats}, we introduce the definition
below.

\begin{definition} 
A matrix $A \in \mathbb{R}^N \times \mathbb{R}^N$ is an \textit{$M$-matrix} if it satisfies
\begin{enumerate}
\item $A_{ij} \leq 0, \qquad i \neq j$
\item $A_{ii} \geq 0$
\item There exists a nonnegative matrix $B\ge 0$ and a number $s \geq
  \rho(B)$ such that $A = sI - B$.
\end{enumerate} 
\end{definition}

Clearly, if $A$ is an $M$-matrix, then $-A$ is a Metzler matrix. 
An $M$-matrix $A$ is invertible if and only if there
exists a $B\ge 0$ and $s> \rho(B)$ such that $A = sI - B$. This is because by the Perron-Frobenius Theorem ~\cite{mmats}, 
the spectral radius of a matrix $B\ge 0$ is also an eigenvalue of the matrix.
We will also need the following well-known result~\cite[pp.~137]{mmats}:

\begin{lemma} \label{lem:equivM}
  Suppose $A \in\RNN$ is nonsingular. Then $A$ is an $M$-matrix if and
  only if~$A^{-1} \geq 0$.
\end{lemma}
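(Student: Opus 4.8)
The plan is to prove the two implications of the equivalence in turn, using the invertibility criterion and the Perron--Frobenius theorem already recalled in the text. It helps to keep in mind that the assertion ``$A$ is an $M$-matrix'' packages together the sign conditions (1)--(2) on the entries of $A$ with the representation (3); as in the cited reference~\cite{mmats}, condition (1) (nonpositive off-diagonal entries) will be the structural backbone of the argument, and I will note below why it cannot be deduced from $A^{-1}\ge 0$ alone.

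\textit{If $A$ is a nonsingular $M$-matrix, then $A^{-1}\ge 0$.} By the invertibility criterion stated just above the lemma, write $A = sI - B$ with $B\ge 0$ and $s>\rho(B)$; in particular $s>0$ and $\rho(s^{-1}B)=\rho(B)/s<1$. Hence $I-s^{-1}B$ is invertible and the Neumann series $\sum_{k\ge 0}(s^{-1}B)^k$ converges entrywise to $(I-s^{-1}B)^{-1}$. Every term $(s^{-1}B)^k$ is a nonnegative matrix, so each partial sum is nonnegative and therefore so is the limit; multiplying by $s^{-1}>0$ gives $A^{-1}=s^{-1}(I-s^{-1}B)^{-1}\ge 0$.

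\textit{If $A$ is nonsingular and $A^{-1}\ge 0$, then $A$ is an $M$-matrix.} First check condition (2): solving $Ax=e_i$ gives $x=A^{-1}e_i\ge 0$, and reading off the $i$-th component of $Ax=e_i$, while using $A_{ij}\le 0$ and $x_j\ge 0$ for $j\ne i$, forces $A_{ii}x_i\ge 1$, so $A_{ii}>0$. Now set $s:=\max_i A_{ii}>0$ and $B:=sI-A$; then $B\ge 0$, since its off-diagonal entries $-A_{ij}$ are nonnegative by (1) and its diagonal entries $s-A_{ii}$ are nonnegative by the choice of $s$. It remains to show $\rho(B)<s$, which yields (3). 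By Perron--Frobenius~\cite{mmats}, $\rho(B)$ is an eigenvalue of $B$ with an eigenvector $x\ge 0$, $x\ne 0$, so $Ax=(sI-B)x=(s-\rho(B))x$. Nonsingularity of $A$ rules out $s=\rho(B)$. If $s-\rho(B)<0$, then $A^{-1}x=(s-\rho(B))^{-1}x$ is a nonzero vector whose entries are all $\le 0$, contradicting $A^{-1}x\ge 0$ (which holds since $A^{-1}\ge 0$ and $x\ge 0$); the two sign relations together would force $A^{-1}x=0$ and hence $x=0$. Therefore $s>\rho(B)\ge 0$, and $A=sI-B$ with $B\ge 0$, $s\ge\rho(B)$, i.e.\ $A$ is an $M$-matrix.

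I expect the reverse implication to be the more delicate one, and within it the single structural input that $A$ has nonpositive off-diagonal entries, i.e.\ condition (1): it is genuinely needed to make $B=sI-A$ nonnegative, and it is not a consequence of $A^{-1}\ge 0$ and nonsingularity alone, since a permutation matrix other than the identity has a nonnegative inverse yet violates (1). This is exactly why the equivalence in~\cite[p.~137]{mmats} is stated for matrices already known to satisfy (1); with that in hand, the Neumann-series half is routine and the remaining work is the Perron-eigenvector sign argument above.
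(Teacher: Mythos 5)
The paper never proves this lemma: it is quoted as a known fact from~\cite[pp.~137]{mmats}, so there is no internal argument to compare yours against; what you supply is the standard proof, and it is essentially correct. The forward half (nonsingular $M$-matrix $\Rightarrow A^{-1}\ge 0$), via the representation $A=sI-B$ with $B\ge 0$, $s>\rho(B)$ and the Neumann series $\sum_{k\ge 0}(s^{-1}B)^k$, is complete, and it correctly uses the invertibility criterion recorded just before the lemma. For the converse, you have put your finger on the real point: $A^{-1}\ge 0$ together with nonsingularity does \emph{not} imply the sign condition $A_{ij}\le 0$ for $i\ne j$ (a non-identity permutation matrix is the simplest counterexample), so the equivalence is only valid within the class of matrices already satisfying that condition, which is exactly how the result is stated on p.~137 of~\cite{mmats}; your proof of conditions (2) and (3) from $A^{-1}\ge 0$ \emph{given} (1) --- positivity of the diagonal via $Ax=e_i$, then $B=sI-A$ with $s=\max_i A_{ii}$ and the Perron eigenvector argument excluding $s\le\rho(B)$ --- is sound. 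Two small remarks: you invoke the existence of a nonnegative eigenvector for $\rho(B)$ for a \emph{general} nonnegative $B$, which is slightly more than the two cases (positive, irreducible) recorded in the paper's Perron--Frobenius theorem, but this is standard and available in~\cite{mmats} (e.g.\ by perturbing $B$ to a positive matrix and passing to the limit); and your observation about the missing sign hypothesis is not merely pedantic, since the paper applies precisely this direction of the lemma to an arbitrary $A$ in the converse part of Proposition~\ref{prop:linear}, where the same permutation-matrix example shows the statement needs the off-diagonal sign assumption (which does hold in the mass-action settings of Section~\ref{sec:examples}, where $-A$ is Metzler). So: correct proof, same route the cited reference takes, plus a legitimate caveat about how the lemma must be read.
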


We are now ready to address the above mentioned question of existence
of homogeneous steady states, under the assumption that each component
of $\vg$ is a linear function.

\begin{proposition}  \label{prop:linear}
  Suppose $\vg(\vv) = - A \vv$ for some $A$ in $\RNN$ and $\valpha \ge
  0$.  If $A$ is a nonsingular $M$-matrix, then for every positive
  constant $\us$, there exists a unique nonnegative homogeneous
  stationary solution $(\us,\vs)$ of~\eqref{eq:GKS}.  Conversely, $A$
  is a nonsingular $M$-matrix if~\eqref{eq:GKS} has a nonnegative
  homogeneous stationary solution $(\us,\vs)$ for every $\us>0$ and
  every~$\valpha \ge 0$.
\end{proposition}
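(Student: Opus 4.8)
The plan is to reduce everything to nonnegative solvability of a single linear system. Since we are after \emph{homogeneous} (spatially constant) stationary solutions, the Neumann conditions \eqref{ssc} hold trivially and the diffusion terms in \eqref{ssa}--\eqref{ssb} vanish, so a constant pair $(\us,\vs)$ solves \eqref{eq:GKS} at steady state exactly when \eqref{eq:ss_condition} holds; under the hypothesis $\vg(\vv)=-A\vv$ this is precisely the linear system $A\vs=\us\,\valpha$. The two matrix facts already recorded in the excerpt --- Lemma~\ref{lem:equivM} and the remark that a nonsingular $M$-matrix has nonnegative inverse --- then do all the work.

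For the forward implication, I would argue as follows: if $A$ is a nonsingular $M$-matrix then $A^{-1}\ge 0$ by Lemma~\ref{lem:equivM}, so for any positive constant $\us$ the system $A\vs=\us\,\valpha$ has the unique solution $\vs=\us\,A^{-1}\valpha$, and this vector is nonnegative because it is a product of the nonnegative scalar $\us$, the nonnegative matrix $A^{-1}$, and the nonnegative vector $\valpha$; uniqueness of $\vs$ is immediate from nonsingularity of $A$. This yields the claimed unique nonnegative homogeneous stationary solution for each $\us>0$.

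For the converse, assume that for every $\us>0$ and every $\valpha\ge 0$ there is a nonnegative constant $\vs$ with $A\vs=\us\,\valpha$. First I would take $\us=1$ and $\valpha=\ve_i$ for each $i=1,\dots,N$ (each $\ve_i\ge 0$, so this is a legal choice), obtaining vectors $\vsi{i}\ge 0$ with $A\vsi{i}=\ve_i$. Hence $\mathrm{range}(A)\supseteq\{\ve_1,\dots,\ve_N\}$, so $A$ is onto and therefore nonsingular. Then each $\vsi{i}=A^{-1}\ve_i$ is the $i$th column of $A^{-1}$, so $A^{-1}\ge 0$, and Lemma~\ref{lem:equivM} gives that $A$ is an $M$-matrix, nonsingular by construction.

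The one place that needs care --- the ``main obstacle,'' modest as it is --- is the logical ordering in the converse: Lemma~\ref{lem:equivM} is stated only for nonsingular matrices, so one must first use the full strength of the hypothesis (solvability for \emph{all} $\valpha\ge 0$, in particular for all basis vectors) to deduce that $A$ is invertible, and only then conclude $A^{-1}\ge 0$ and invoke the lemma; without establishing invertibility first, nonnegativity of a would-be inverse is not even meaningful.
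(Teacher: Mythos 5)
Your proof is correct and follows essentially the same route as the paper's: the forward direction via $\vs=\us\,A^{-1}\valpha$ together with Lemma~\ref{lem:equivM}, and the converse by solving $A\vsi{i}=\ve_i$ for each basis vector to build a nonnegative right inverse (the paper's matrix $C$ with $AC=I$), which forces invertibility and $A^{-1}\ge 0$. Your closing remark about establishing invertibility before invoking the lemma is exactly the ordering the paper's argument uses implicitly; no gaps.
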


\begin{proof}
  First, suppose that $A$ is a nonsingular $M$-matrix. Let $\us$ be
  any positive constant. Then setting
  \begin{equation}
    \label{eq:2}
    \vs = \us  A^{-1} \valpha,
  \end{equation}
  we observe that $\vs$ satisfies~\eqref{eq:ss_condition}. Hence
  $(\us,\vs)$ satisfies~\eqref{eq:ss}. Furthermore, $\vs \ge 0$ by
  virtue of Lemma~\ref{lem:equivM}, so $(\us,\vs)$ is a nonnegative
  homogeneous stationary solution. For the given $\us$, the component
  $\vs$ is uniquely determined. Indeed, if $(\us,\vs')$ were another
  homogeneous stationary solution, then by~\eqref{eq:ss_condition}, $
  A\vs' = \us \valpha$. But by the invertibility of $A$
  and~\eqref{eq:2}, this implies $\vs' = \vs$.  This proves the first
  assertion of the proposition.

  Conversely, suppose $(\us,\vsi i)$ is a nonnegative homogeneous
  stationary solution obtained using $\us=1$ and $\valpha = \ve_i$,
  the unit vector in the $i$th coordinate direction.  Then,
  by~\eqref{eq:ss_condition}, $A\vsi i = \ve_i$. Letting $C$ denote
  the matrix whose $i$th column is $\vsi i$, this implies that $AC =
  I$, the identity. Hence $A$ is invertible, and moreover, since $\vsi
  i \ge 0$, we have $A^{-1}= C \ge 0$. Hence by
  Lemma~\ref{lem:equivM}, $A$ is a nonsingular $M$-matrix.
\end{proof}

Having given a sufficient condition under which plenty of nonnegative
homogeneous steady states exist, we now proceed to study the stability
of such states, assuming they exist. Note that we no longer assume
that $\vg$ is linear in the ensuing analysis.

\section{Instability of homogeneous steady states}   \label{sec:instab}

In order to analyze the stability of a given homogeneous steady state
$(\us,\vs)$, we begin by linearizing the spatial partial differential
operator in~\eqref{eq:GKS} about $(\us,\vs)$. Let $\lambda$ be an
eigenvalue of the linearized operator, i.e., $\lambda$ satisfies 
\begin{subequations}
  \label{eq:LinearizedEW}
  \begin{align}\label{lin_ss1}
    D \Delta U - \chi \us  \Delta V_N 
    &= \lambda U , & x  \in  \Omega, \quad t>0
    \\ \label{lin_ss2}
    \tilde{D} \Delta \vV + \valpha U
    + J \vV 
    &= \lambda\vV , & x  \in  \Omega, \quad t>0 
    \\ \label{lin_ss3}
    \frac{\partial U}{\partial n} = \frac{\partial V_k}{\partial n} 
    &= 0, & x \in \partial \Omega, \quad k=1,2,\ldots, N.
\end{align}
\end{subequations}
for some nontrivial pair of functions $(U(x), \vV(x))$ on $\Omega$.
Here the $N\times N$ (Jacobian) matrix $J$ denotes the \Frechet\
derivative of $\vg$ with respect to $\vv$
\begin{equation}
  \label{eq:J}
  J = \frac{\partial \vg}{\partial \vec{v}}\bigg|_{\vs},
\end{equation}
i.e., $J_{kl} = \d g_k / \d v_l$ evaluated at $\vv = \vs$.  The
purpose of this section is to study the eigenvalue
problem~\eqref{eq:LinearizedEW} and thereby find conditions under
which the homogeneous state $(\us,\vs)$ is unstable.

We will use the weak formulation of~\eqref{eq:LinearizedEW}, where $u$
and the components $v_i$ are all sought in the (complex) Sobolev space
$H^1(\Omega)$ consisting of square integrable functions
whose first order distributional derivatives are also square integrable.
By definition of the weak eigenproblem, the number $\lambda$ is
an eigenvalue of the linearized operator if there exists a nontrivial
\com{$(U,V_1,\ldots V_N)$ in $H^1(\Omega)^{N+1}$} satisfying 
\begin{subequations}
  \label{eq:WeakLinEW}
  \begin{align}
    \label{eq:WeakLinEW-1}
    -\int_\Omega D \grad U \cdot \overline{\grad \varphi}
    + \int_\Omega \chi \us \grad V_N \cdot \overline{\grad \varphi }
    & = \lambda \int_\Omega U \overline{\varphi},
    \\ 
    \label{eq:WeakLinEW-2}
    -\int_\Omega \tilde D_k \grad V_k \cdot \overline{\grad \varphi}
    + 
    \alpha_k \int_\Omega U \overline{\varphi }
    +
    \sum_{l=1}^N J_{kl} \int_\Omega V_l \overline{\varphi }
    & = \lambda \int_\Omega V_k \overline{\varphi}, 
  \end{align}
\end{subequations}
for $k = 1,2,\ldots, N,$ and for all $\varphi$ in $H^1(\Omega).$ The integrals are with respect to
the standard Lebesgue measure (omitted). The
system~\eqref{eq:WeakLinEW} can be obtained from the classical
formulation~\eqref{eq:LinearizedEW} by multiplying the equations
of~\eqref{eq:LinearizedEW} by a test function $\overline\varphi$ and
integrating by parts, provided 
the boundary of the domain $\d \Omega$ and 
the eigenfunctions $u, \vec{v}$ 
are smooth
enough. So as to admit irregular domains with possibly nonsmooth
eigenfunctions, we adopt the weak formulation~\eqref{eq:WeakLinEW} as
the definition of our eigenproblem.

\subsection{Reduction to finite dimensions}

The first step in our study of~\eqref{eq:LinearizedEW} proceeds by
generalizing a method of Schaaf~\cite{schaaf}. This allows us to
relate the eigenvalues of the infinite dimensional
eigenproblem~\eqref{eq:WeakLinEW} to a finite dimensional
eigenproblem. The latter is easier to analyze. The finite dimensional
eigenproblem involves an $(N+1) \times (N+1)$ matrix, written in block
form as
\[
M(\mu) = 
\begin{bmatrix}
  D \mu   &  \vkappat\\
  \valpha & J + \mu \tilde D
\end{bmatrix},
\]
where $\vkappat = (0,0,\cdots, -\chi \us \mu)$ and $\mu$ is a real
parameter.  The parameter $\mu$ will always be set to one of the
numbers 
\[
0= \mu_0 > \mu_1 \geq \mu_2 \geq \mu_3 \geq \cdots
\] 
in the
spectrum of the Laplace operator with Neumann boundary conditions,
i.e., $\mu_i$ satisfies
\begin{equation}
  \label{eq:NeumannEW}
  -\int_\Omega \grad \omega_i \cdot \overline{\grad \varphi} = 
  \mu_i \int_\Omega \omega_i \overline{\varphi},
  \qquad \forall \varphi \in H^1(\Omega),
  \quad i=0,1,2,\ldots,
\end{equation}
where $\omega_i$ is the corresponding eigenfunction $\omega_i$ in
$H^1(\Omega)$, normalized to have unit $L^2(\Omega)$-norm.
With these notations, the reduction to finite
dimension is achieved by the following result.


\begin{theorem} \label{thm:reduction} %
  The number $\lambda$ solves the eigenproblem~\eqref{eq:WeakLinEW} if
  and only if it is an eigenvalue of $M(\mu_i)$ for some integer~$i \geq
  0$. 
\end{theorem}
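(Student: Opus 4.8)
The plan is to exploit the fact that the Neumann eigenfunctions $\{\omega_i\}_{i\ge 0}$ of~\eqref{eq:NeumannEW} form a complete orthonormal system in $L^2(\Omega)$, so that a solution of the weak problem is determined by its sequence of ``Fourier modes.'' For a candidate $(U,V_1,\dots,V_N)\in H^1(\Omega)^{N+1}$ of~\eqref{eq:WeakLinEW}, I would set $U^{(j)}=\int_\Omega U\,\overline{\omega_j}$ and $V_k^{(j)}=\int_\Omega V_k\,\overline{\omega_j}$ and collect these into the vector $w^{(j)}=(U^{(j)},V_1^{(j)},\dots,V_N^{(j)})^t\in\mathbb{C}^{N+1}$. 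The goal is to show that, for each $j$, the weak equations are equivalent to the finite-dimensional relation $M(\mu_j)\,w^{(j)}=\lambda\,w^{(j)}$, and that this correspondence is tight enough to yield the stated equivalence.

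The computational core is to insert $\varphi=\omega_j$ into~\eqref{eq:WeakLinEW-1}--\eqref{eq:WeakLinEW-2} and rewrite the Dirichlet-form terms. Using~\eqref{eq:NeumannEW} with its test function set equal to $U$ (and taking a complex conjugate, which is harmless because the $\mu_j$ are real) gives $-\int_\Omega \grad U\cdot\overline{\grad\omega_j}=\mu_j\,U^{(j)}$, and likewise $\int_\Omega\grad V_N\cdot\overline{\grad\omega_j}=-\mu_j\,V_N^{(j)}$; the zeroth-order terms merely reproduce the Fourier coefficients by orthonormality. Substituting, \eqref{eq:WeakLinEW-1} collapses to $D\mu_j U^{(j)}-\chi\us\mu_j V_N^{(j)}=\lambda U^{(j)}$ and \eqref{eq:WeakLinEW-2} (for each $k$) to $\tilde{D}_k\mu_j V_k^{(j)}+\alpha_k U^{(j)}+\sum_{l}J_{kl}V_l^{(j)}=\lambda V_k^{(j)}$. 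Reading the left-hand sides as the rows of $M(\mu_j)\,w^{(j)}$ — with $\vkappat=(0,\dots,0,-\chi\us\mu_j)$ supplying the coupling in the first row and the diagonal $\tilde{D}$ entering the lower block — this is exactly $M(\mu_j)w^{(j)}=\lambda w^{(j)}$.

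Both directions then follow quickly. If $\lambda$ solves~\eqref{eq:WeakLinEW} with a nontrivial $(U,\vV)$, then, since $(U,\vV)$ is nonzero in $L^2(\Omega)^{N+1}$ and $\{\omega_j\}$ is complete in $L^2(\Omega)$, at least one $w^{(j)}$ is nonzero, and $M(\mu_j)w^{(j)}=\lambda w^{(j)}$ exhibits $\lambda$ as an eigenvalue of $M(\mu_j)$. Conversely, given $i\ge0$ and a nonzero $w\in\mathbb{C}^{N+1}$ with $M(\mu_i)w=\lambda w$, I would take $(U,\vV)$ to be $\omega_i$ times the entries of $w$; this lies in $H^1(\Omega)^{N+1}$ and is nontrivial since $\omega_i\neq0$. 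Substituting it into~\eqref{eq:WeakLinEW} and applying~\eqref{eq:NeumannEW} once more, every term becomes a scalar multiple of $\int_\Omega\omega_i\,\overline\varphi$; factoring out this common quantity, the weak equations reduce — for \emph{every} $\varphi\in H^1(\Omega)$, with no density argument needed — to the scalar identities $M(\mu_i)w=\lambda w$, which hold by hypothesis.

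The one genuinely delicate ingredient is the spectral-theoretic input that $\{\omega_i\}$ really is a complete orthonormal system in $L^2(\Omega)$, i.e. that the Neumann Laplacian has the purely discrete spectrum $0=\mu_0>\mu_1\ge\cdots$ displayed above; this is where some mild regularity of $\Omega$ (compactness of the embedding $H^1(\Omega)\hookrightarrow L^2(\Omega)$) enters, and I would record it as a standing assumption. Apart from that, the only thing to watch is the bookkeeping with complex conjugates and the antilinearity in $\varphi$ in~\eqref{eq:WeakLinEW}, together with the observation that makes the converse painless: one never needs $\{\omega_i\}$ to span $H^1(\Omega)$, because testing against the single eigenfunction $\omega_i$ already determines the weak equations for all test functions once $(U,\vV)$ is a pure mode.
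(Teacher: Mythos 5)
Your proposal is correct and follows essentially the same route as the paper: test the weak equations against a single Neumann eigenfunction $\omega_i$ (chosen, via completeness of $\{\omega_i\}$ in $L^2(\Omega)$, so that the vector of Fourier coefficients is nonzero) to get $M(\mu_i)w=\lambda w$, and conversely take $(U,\vV)$ to be $\omega_i$ times an eigenvector of $M(\mu_i)$ and verify the weak equations directly for all test functions. Your extra remarks on conjugation bookkeeping and on the compact-embedding hypothesis underlying completeness are sound but do not change the argument.
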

\begin{proof}
  Suppose $\lambda$ satisfies~\eqref{eq:WeakLinEW} with a nontrivial
  $(u,v_1,\ldots v_N)$ in $H^1(\Omega)^{N+1}$.  Then, since $\{
  \omega_i : i \ge 0 \}$ form a complete orthonormal set in
  $L^2(\Omega)$, it is possible to find an $i$ such that not all of
  the $N+1$ numbers
  \[
  x_0 = \int_\Omega u \overline\omega_i, 
  \quad\text{and}\quad 
  x_j = \int_\Omega v_j \overline\omega_i, \quad\text{for } j=1,2,\ldots, N,
  \]
  are zero, i.e., the vector $\vx = [x_0, x_1, \cdots, x_N]^t$ is
  nontrivial. Furthermore, choosing $\varphi = \omega_i$ (with the
  above found $i$) in~\eqref{eq:WeakLinEW},
  using~\eqref{eq:NeumannEW}, and rewriting the resulting equations in
  terms of the numbers $x_j$, we find that
  \begin{equation*}
    \begin{bmatrix}
      D \mu_i & 0 & 0 & \cdots &  -\chi \mu_i \us \\
      \alpha_1 & J_{11} + \mu_i \tilde{D}_1 & J_{12} & \cdots & J_{1N} \\
      \alpha_2 & J_{21} & J_{22} + \mu_i \tilde{D}_2 & \cdots & J_{2N} \\
      \vdots & \vdots & \vdots & \ddots & \vdots  \\
      \alpha_N & J_{N1} & J_{N2} & \cdots & J_{NN} + \mu_i  \tilde{D}_N
    \end{bmatrix}
    \vec{x} = \lambda \vec{x}.
  \end{equation*}
  The matrix above is the same as $M(\mu_i)$. Thus we have shown that
  $\lambda$ is an eigenvalue of $M(\mu_i)$ with $\vx \ne \vec 0$ as
  its corresponding eigenvector.

  To prove the converse, suppose $\lambda$ satisfies 
  \[
  M(\mu_i) \vx = \lambda \vx
  \]
  for some $\vx \ne \vec 0$ and some $i$. Then, setting 
  \[
    U  = x_0 \omega_i
    \quad\text{and}\quad
    V_k = x_k \omega_i\quad \text{ for } k =1,2,\ldots, N,
  \]
  we find that the equations of~\eqref{eq:WeakLinEW} hold for any
  $\varphi \in H^1(\Omega)$. Thus, $\lambda$ is an eigenvalue of the
  linearized operator with the (nontrivial) pair $(u,\vv)$ as its
  corresponding eigenfunction. 
\end{proof}

\begin{remark}
  \com{Recalling that the mean of $u$ in~\eqref{gen1} does not vary
    with $t$, we may restrict ourselves while linearizing to
    perturbations $U$ with $\int_\Omega U =0$. Suppose we also
    restrict the perturbations in $V_k$ to be spatially nonhomogeneous
    by imposing $\int_\Omega V_k =0$. Then, following the proof of
    Theorem~\ref{thm:reduction}, it is easy to see that $\lambda$ is
    an eigenvalue of $M(\mu_i)$ for some $i\ge 1$ if and only if it
    solves the eigenproblem~\eqref{eq:WeakLinEW} with the additional
    conditions $\int_\Omega U = \int_\Omega V_k =0$ for all
    $k=1,\ldots, N.$}
\end{remark}

\subsection{A sufficient condition for destabilization}

We now focus on conditions under which the linearized operator has at
least one eigenvalue $\lambda$ in the right half of the complex
plane. In such a case, the corresponding stationary state is called
{\em linearly unstable}.  In view of Theorem~\ref{thm:reduction}, we
only need to study the eigenvalues of the parametrized
matrix~$M(\mu)$. Let us begin by reviewing a few useful results from
matrix theory that we shall need. The following definitions and the
next two theorems are well-known -- see e.g.~\cite[Theorems~1.4
and~2.35 in Chapter~2]{mmats}.

\begin{definition}
  The \textit{directed graph} $G(A)$ associated to an $N \times N$
  matrix $A$, consists of $N$ vertices $V_1,V_2,...,V_N$ where an edge
  leads from $V_i$ to $V_j$ if and only if $a_{ij} \neq 0$.
\end{definition}
\begin{definition}
  A directed graph $G$ is \textit{strongly connected} if for any
  ordered pair $(V_i,V_j)$ of vertices of $G$ (with $i\ne j$), there
  exists a sequence of edges (a {\em path}) which leads from $V_i$ to~$V_j$.
\end{definition}
\begin{definition}
  A matrix $A$ is called \textit{irreducible} if $G(A)$ is strongly
  connected.
\end{definition}

\begin{theorem}\rm{(Perron-Frobenius)}  \label{thm:perronfrob}
Let $A \ge 0$ be a square matrix.
\begin{enumerate}
\item If $A>0$, then $\rho(A)$ is a simple eigenvalue of $A$, greater
  than the magnitude of any other eigenvalue.

\item If $A$ is irreducible, then $\rho(A)$ is a simple eigenvalue,
  any eigenvalue of $A$ of the same modulus is also simple, $A$ has a
  positive eigenvector $\vx$ corresponding to $\rho(A)$, and any
  nonnegative eigenvector of $A$ is a multiple of $\vx$.
\end{enumerate}
\end{theorem}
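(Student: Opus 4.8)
The plan is to (a) produce a nonnegative eigenvector by a fixed-point argument, (b) identify its eigenvalue with $\rho(A)$ by pairing against a positive left eigenvector, (c) derive algebraic simplicity and the statement on nonnegative eigenvectors, and (d) settle the delicate ``equal modulus'' claim of part~(2) by a phase/triangle-inequality argument. For step~(a) I would apply Brouwer's fixed point theorem to the continuous self-map $F(\vx)=A\vx/\|A\vx\|_1$ of the simplex $\Sigma=\{\vx\ge 0:\sum_i x_i=1\}$; this is well defined because $A$ has no zero column (true for irreducible $A$, trivial for $A>0$), so $A\vx\ne\vec 0$ on $\Sigma$. A fixed point $\vx_0$ gives $A\vx_0=\lambda_0\vx_0$ with $\lambda_0=\|A\vx_0\|_1\ge 0$ and $\vx_0\in\Sigma$. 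If $A>0$ then $\vx_0=\lambda_0^{-1}A\vx_0>0$ at once. If $A$ is merely irreducible, put $T=\{i:(\vx_0)_i>0\}$; the components $i\notin T$ of $A\vx_0=\lambda_0\vx_0$ force $a_{ij}=0$ for all $i\notin T$, $j\in T$, i.e.\ no edge of $G(A)$ enters $T$ from outside, so strong connectedness makes $T$ the whole index set and again $\vx_0>0$ (whence $\lambda_0>0$).

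For step~(b), applying step~(a) to $A^t$ (positive, resp.\ irreducible, exactly when $A$ is) produces $\vy_0^{\,t}>0$ with $\vy_0^{\,t}A=\eta_0\vy_0^{\,t}$; multiplying by $\vx_0$ and using $\vy_0^{\,t}\vx_0>0$ gives $\eta_0=\lambda_0$. For any eigenvalue $\mu$ with eigenvector $\vv\ne\vec 0$ (possibly complex), taking entrywise moduli yields $|\mu|\,|\vv|=|A\vv|\le A|\vv|$, and pairing with $\vy_0^{\,t}$ gives $|\mu|\,(\vy_0^{\,t}|\vv|)\le\lambda_0\,(\vy_0^{\,t}|\vv|)$, hence $|\mu|\le\lambda_0$. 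Thus $\lambda_0=\rho(A)$ is an eigenvalue with the positive eigenvector $\vx_0$, which already proves existence and positivity of the Perron vector and shows that a nonnegative eigenvector must have eigenvalue $\rho(A)$.

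For step~(c), I would first show the $\rho(A)$-eigenspace is one-dimensional; since $\rho(A)$ is real it suffices to treat a real eigenvector $\vv$. If $A>0$: for $t$ at a finite endpoint of the set where $\vx_0+t\vv\ge 0$, the vector $\vx_0+t\vv$ has a vanishing entry, yet $A(\vx_0+t\vv)=\rho(A)(\vx_0+t\vv)$ would then be strictly positive unless $\vx_0+t\vv=\vec 0$, forcing $\vv\parallel\vx_0$. If $A$ is irreducible, apply this to $B=(I+A)^{m-1}>0$ (with $m$ the order of $A$), which commutes with $A$ and satisfies $\rho(B)=(1+\rho(A))^{m-1}$ with $\vx_0$ as its Perron vector, so the one-dimensional $\rho(B)$-eigenspace of $B$ contains every $\rho(A)$-eigenvector of $A$. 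Algebraic simplicity then follows since a generalized eigenvector $\vec w$ with $(A-\rho(A)I)\vec w=\vx_0$ would give the contradiction $0=\vy_0^{\,t}(A-\rho(A)I)\vec w=\vy_0^{\,t}\vx_0>0$. One-dimensionality together with positivity of $\vx_0$ also finishes part~(2): a nonnegative eigenvector has eigenvalue $\rho(A)$ by step~(b), hence lies in this eigenspace and is a multiple of $\vx_0$.

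The genuinely technical point, and the one I expect to be the main obstacle, is the part~(2) assertion that \emph{every} eigenvalue $\mu$ with $|\mu|=\rho(A)$ is simple, where $\mu$ need not equal $\rho(A)$. Here the inequality $|\mu|\,|\vv|=|A\vv|\le A|\vv|$ paired with $\vy_0^{\,t}$ is forced to be an equality $A|\vv|=\rho(A)|\vv|$, so $|\vv|=c\vx_0>0$; writing $\vv=D|\vv|$ with $D=\mathrm{diag}(v_j/|v_j|)$ unitary, the entrywise equalities $|(A\vv)_i|=(A|\vv|)_i$ are equalities in the triangle inequality, so for each $i$ all $v_j$ with $a_{ij}\ne 0$ share one phase; matching $(A\vv)_i=\mu v_i$ with $\mu=\rho(A)e^{i\theta}$ then yields the conjugation identity $D^{-1}AD=e^{i\theta}A$. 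Hence $A$ and $e^{i\theta}A$ are similar, the characteristic polynomial is invariant under $z\mapsto e^{i\theta}z$, and $\mu=e^{i\theta}\rho(A)$ inherits the algebraic multiplicity $1$ of $\rho(A)$. When $A>0$ every $a_{ij}>0$, so this argument forces $D$ to be a scalar matrix and $e^{i\theta}=1$, i.e.\ $\rho(A)$ is the unique eigenvalue of maximal modulus; together with $|\mu|\le\rho(A)$ from step~(b) this gives the maximality claim of part~(1). The phase bookkeeping producing $D^{-1}AD=e^{i\theta}A$ is the only nonroutine ingredient; everything else reduces to the Brouwer construction and pairing against the positive left eigenvector $\vy_0^{\,t}$.
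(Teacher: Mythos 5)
Your proposal is a correct and essentially complete proof, but note that the paper does not prove this statement at all: Theorem~\ref{thm:perronfrob} is quoted as a known result and attributed to Berman and Plemmons~\cite{mmats} (Theorems~1.4 and~2.35 of Chapter~2), so there is no in-paper argument to compare against. What you supply is the classical self-contained proof: Brouwer's fixed point theorem on the simplex to manufacture a nonnegative eigenvector, the support argument plus strong connectedness to upgrade it to a positive one, pairing against the positive left Perron vector of $A^t$ to identify the eigenvalue with $\rho(A)$, to bound all other eigenvalues, and to rule out Jordan blocks, the $(I+A)^{N-1}>0$ trick to transfer geometric simplicity from the positive to the irreducible case, and the equality-in-triangle-inequality phase bookkeeping yielding $D^{-1}AD=e^{i\theta}A$, which settles both the simplicity of peripheral eigenvalues in part~(2) and the strict dominance of $\rho(A)$ in part~(1). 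All of these steps check out (the standard reference instead builds the theory around the Collatz--Wielandt variational characterization, so your fixed-point route is a legitimately different, and arguably more elementary, path to the same statement). The only blemishes are cosmetic: the degenerate $1\times 1$ zero matrix, which is vacuously irreducible under the paper's definition of strong connectedness, makes your map $F(\vx)=A\vx/\|A\vx\|_1$ undefined and should be excluded by convention or treated separately; and the phrase ``the characteristic polynomial is invariant under $z\mapsto e^{i\theta}z$'' should be stated more carefully as the fact that the spectrum of $A$, with algebraic multiplicities, is mapped onto itself by multiplication by $e^{i\theta}$, which is exactly what the similarity $D^{-1}AD=e^{i\theta}A$ gives. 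Neither issue affects the substance, and the parts of the theorem actually used later in the paper (that $\rho(B)$ is an eigenvalue of an irreducible nonnegative $B$) are fully covered by your argument.
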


\begin{theorem}\rm{(Spectral radius bounds)}    \label{thm:sradius}
  Let $A\ge 0$ be an irreducible $N \times N$ matrix. Letting $s_i$
  denote the sum of the elements of the $i$th row of $A$, define
  \[
  S(A) = \max_{1 \leq i \leq N} s_i
  \qquad\text{and}\qquad 
  s(A) = \min_{1 \leq i \leq N} s_i.
  \] 
  Then the spectral radius $\rho(A)$ satisfies
  \[
  s(A) \leq \rho (A) \leq S(A).
  \]
\end{theorem}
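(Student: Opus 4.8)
The plan is to derive both inequalities directly from the Perron--Frobenius theorem (Theorem~\ref{thm:perronfrob}), specifically its second part, which applies here because $A$ is nonnegative and irreducible. That part guarantees that the spectral radius $\rho(A)$ is an eigenvalue of $A$ and that there is an associated \emph{strictly positive} eigenvector $\vx = [x_1,\dots,x_N]^t>0$. Once this eigenvector is in hand, the whole argument amounts to reading off the eigenvalue equation $A\vx=\rho(A)\vx$ one row at a time.

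First I would fix an index $p$ at which $x_p=\max_{1\le i\le N}x_i$ (so $x_p>0$) and examine the $p$th component of $A\vx=\rho(A)\vx$: since $A_{pj}\ge 0$ and $x_j\le x_p$ for all $j$, we get $\rho(A)\,x_p=\sum_{j=1}^N A_{pj}x_j\le\sum_{j=1}^N A_{pj}x_p=s_p\,x_p\le S(A)\,x_p$, and dividing by $x_p>0$ yields $\rho(A)\le S(A)$. Symmetrically, choosing $q$ with $x_q=\min_i x_i>0$ and looking at the $q$th component gives $\rho(A)\,x_q=\sum_j A_{qj}x_j\ge\sum_j A_{qj}x_q=s_q\,x_q\ge s(A)\,x_q$, hence $\rho(A)\ge s(A)$. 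Combining the two gives $s(A)\le\rho(A)\le S(A)$.

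There is no genuine obstacle here beyond having the positive eigenvector available, and that is precisely what irreducibility supplies through Perron--Frobenius; it is in fact the only place irreducibility enters. If one wished to avoid invoking the positive eigenvector, an alternative route is to apply the elementary fact that for $A\ge 0$, an inequality $A\vec y\le c\,\vec y$ with $\vec y>0$ forces $\rho(A)\le c$ (and the reverse inequality the reverse bound) to the all-ones vector $\vec y=(1,\dots,1)^t$, whose image under $A$ has entries $s_1,\dots,s_N$ lying between $s(A)$ and $S(A)$; but the eigenvector argument above is the shortest given what the excerpt already provides.
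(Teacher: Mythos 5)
Your proof is correct. Note that the paper does not prove this statement at all: it quotes it as a well-known fact with a citation to \cite{mmats}, so there is no in-paper argument to compare against. Your route --- invoking part (2) of Theorem~\ref{thm:perronfrob} to obtain a strictly positive eigenvector for $\rho(A)$ and then reading the eigenvalue equation at a maximal and at a minimal component --- is the standard textbook argument, and both extremal-coordinate estimates and the divisions by $x_p>0$, $x_q>0$ are justified exactly as you say; the alternative sketch via the all-ones vector and the sub/super-invariance characterization of $\rho$ is also sound (and shows the bounds do not really need irreducibility, which here only serves to guarantee the positive eigenvector).
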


With the help of these well-known results, we can now give a
sufficient condition for the destabilization. 


\begin{theorem} \label{thm:suff1} %
  Assume that the system~\eqref{eq:GKS} has a positive
  homogeneous steady state solution $(\us,\vs)$
  and let $J$ be as in~\eqref{eq:J}.  Suppose $J$ and $\valpha \ge 0$
  satisfy the following conditions:
  \begin{enumerate}
  \item There is an \com{$i_*$ with} $1 \leq i_* \leq N$ such that $\alpha_{i_*} > 0$,
  \item $J$ is irreducible, and 
  \item $J$ is Metzler.
  \end{enumerate}
  Then, if either $\chi \us$ or $\alpha_{i_*}$ is sufficiently large,
  then $(\us,\vs)$ is linearly unstable.
\end{theorem}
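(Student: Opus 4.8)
The plan is to invoke Theorem~\ref{thm:reduction} to pass from the infinite-dimensional weak eigenproblem to the finite matrices $M(\mu_i)$, and then to show that for the first negative Neumann eigenvalue $\mu_1<0$ the matrix $M(\mu_1)$ has a positive real eigenvalue as soon as $\chi\us$ or $\alpha_{i_*}$ is large. Since a positive eigenvalue of $M(\mu_1)$ is, by Theorem~\ref{thm:reduction}, an eigenvalue of the linearized operator with positive real part, $(\us,\vs)$ is then linearly unstable. The first point to record is structural: because $\mu_1<0$, the matrix $M(\mu_1)$ is Metzler --- its off-diagonal entries are either entries of $J$ (nonnegative since $J$ is Metzler), or the $\alpha_l\ge 0$ in the first column, or the single entry $-\chi\us\mu_1>0$ in the first row. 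Moreover, using $\alpha_{i_*}>0$, $\chi\us>0$, and the irreducibility of $J$, the directed graph $G(M(\mu_1))$ is strongly connected: it contains the feedback cycle running from the $u$-vertex to the $v_N$-vertex (weight $-\chi\us\mu_1$), then along some path inside $G(J)$ from $v_N$ to $v_{i_*}$, then back to the $u$-vertex (weight $\alpha_{i_*}$), and every vertex lies on a path to and from this cycle because $G(J)$ is strongly connected. Hence $M(\mu_1)$ is irreducible Metzler, so after a shift $M(\mu_1)+cI$ is irreducible nonnegative, and Theorem~\ref{thm:perronfrob} gives a real eigenvalue $\lambda_{\max}(M(\mu_1))$ dominating the real part of every eigenvalue of $M(\mu_1)$; it suffices to make this number positive.

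To bound $\lambda_{\max}(M(\mu_1))$ from below I would localize to the feedback cycle. Passing to the principal submatrix $B$ of $M(\mu_1)$ supported on the cycle vertices only decreases the dominant eigenvalue (after the common shift, a principal submatrix of a nonnegative matrix has no larger spectral radius), and so does deleting the remaining nonnegative ``chord'' entries of $B$ and replacing the diagonal of $B$ by its smallest entry $\delta$ (the dominant eigenvalue of a Metzler matrix is monotone under the entrywise order). What is left is $\delta I$ plus a pure weighted $\ell$-cycle $C$, and $C^\ell=PI$ where $P$ is the product of the $\ell$ weights around the cycle, so $\rho(C)=P^{1/\ell}$ and therefore $\lambda_{\max}(M(\mu_1))\ge \delta+P^{1/\ell}$. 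Here $P=(-\mu_1)\,\pi\,\chi\us\,\alpha_{i_*}$, where $\pi>0$ is the product of the (positive) $J$-entries along the chosen path from $v_N$ to $v_{i_*}$, while $\delta$, $\ell$, and $\pi$ do not depend on $\chi\us$ (nor, in the matrix $M(\mu_1)$, on $\alpha_{i_*}$). Consequently $P\to\infty$, hence $\lambda_{\max}(M(\mu_1))\to\infty$, as $\chi\us\to\infty$ or $\alpha_{i_*}\to\infty$, which gives $\lambda_{\max}(M(\mu_1))>0$ once either parameter exceeds a threshold, as desired.

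The main obstacle is precisely this lower bound on the Perron root: the naive estimate $\rho\ge s(\cdot)$ from Theorem~\ref{thm:sradius} applied to the whole matrix is useless, because the rows of $M(\mu_1)$ belonging to chemicals not produced by $u$ have bounded, possibly negative, row sums. The remedy is to restrict attention to the feedback cycle, into which both large parameters enter multiplicatively, and to use that the Perron root of a pure cycle is the geometric mean of its weights --- equivalently, to apply the lower bound $\rho(A)\ge\min_i (Ax)_i/x_i$ implicit in Theorem~\ref{thm:sradius} with a test vector graded geometrically along the cycle. One small technicality should be dispatched separately: if $J+\mu_1\tilde D$ already has a nonnegative Perron root, then $\lambda_{\max}(M(\mu_1))\ge 0$ by principal-submatrix monotonicity and in fact $>0$ by irreducibility, so the conclusion holds for all $\chi\us,\alpha_{i_*}$; the substantive regime is the opposite one, in which $-(J+\mu_1\tilde D)$ is a nonsingular $M$-matrix, and there an equally short alternative to the cycle argument is the Schur-complement identity $\det M(\mu_1)=\det(J+\mu_1\tilde D)\,\mu_1\,(D+\chi\us q)$ with $q=\sum_l[(J+\mu_1\tilde D)^{-1}]_{Nl}\,\alpha_l<0$ (negative by Lemma~\ref{lem:equivM} together with $\alpha_{i_*}>0$), which flips the sign of the constant term of the characteristic polynomial of $M(\mu_1)$ for large $\chi\us$ or $\alpha_{i_*}$ and hence forces a positive real root.
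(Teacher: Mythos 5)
Your proposal is correct, and its skeleton coincides with the paper's: pass to the matrices $M(\mu)$ via Theorem~\ref{thm:reduction}, fix a negative Neumann eigenvalue, observe that the Metzler/irreducibility hypotheses make the shifted matrix nonnegative and irreducible so that Perron--Frobenius applies, and then show the dominant (real) eigenvalue blows up as $\chi\us$ or $\alpha_{i_*}$ grows. Where you genuinely differ is in the quantitative step. The paper proves $\rho(B)\to\infty$ (with $B=M(\mu)+rI$) by raising $B^t$ to a power $m$ for which every pair of vertices is joined by a path of length exactly $m$, and then applying the minimal-row-sum bound of Theorem~\ref{thm:sradius} to $(B^t)^m$; you instead localize to the feedback cycle $V_0\to V_N\to\cdots\to V_{i_*}\to V_0$, use entrywise monotonicity of the Metzler spectral abscissa together with the principal-submatrix inequality, and read off the exact Perron root of a pure weighted cycle, obtaining the explicit lower bound $\delta+\bigl((-\mu_1)\,\pi\,\chi\us\,\alpha_{i_*}\bigr)^{1/\ell}$. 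This buys two things the paper's estimate does not make visible: an explicit instability threshold in the single combined parameter $\chi\us\,\alpha_{i_*}$ (consistent with the paper's rescaling remark), and the fact that only a directed path from $V_N$ to $V_{i_*}$ in $G(J)$ is really used, so your argument essentially proves Theorem~\ref{thm:suff2} directly (irreducibility entering only to invoke Theorem~\ref{thm:perronfrob}, which one could avoid by restricting to the strongly connected component, as the paper later does). Two small points to tidy: choose the $V_N$-to-$V_{i_*}$ path simple so the cycle matrix $C$ genuinely satisfies $C^\ell=PI$; and in your alternative Schur-complement remark, strict negativity of $q$ needs the entrywise positivity of the inverse of an \emph{irreducible} nonsingular $M$-matrix, not just the nonnegativity given by Lemma~\ref{lem:equivM}.
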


\begin{remark}
  The biological interpretation of the first condition is that the
  organism must produce at least one of the chemicals in the reaction
  network. The second condition is a technical condition which we will
  relax in the next theorem. The third condition is the most
  restrictive of the conditions, but we give an example of a large
  class of CRNs for which it holds in Corollary~\ref{cor:example}. We
  also give examples of CRNs which do not meet condition three but for
  which a similar result still holds (see Section~\ref{sec:examples}).
\end{remark}

\begin{remark}
\com{For any fixed $i_*$ we may rescale $u_*$ by $u_*'=\alpha_{i_*} u_*$ to rewrite~\eqref{ssa} as
\[
D \Delta u_*' - \nabla \cdot (C \nabla [\vv_*]_N) =0
\]
where $C=\alpha_{i_*} \chi u_*$. Then the condition
on $\chi u_*$ or
$\alpha_{i_*}$ in Theorem~\ref{thm:suff1} can be interpreted in terms of
the single parameter $C$.}
\end{remark}

\begin{proof}
  We use Theorem~\ref{thm:reduction}. Accordingly, it is enough to
  prove that $M(\mu_i)$ has a positive eigenvalue $\lambda$ for some
  $\mu_i$. We will now show that for any {$\mu<0$}, the matrix $M(\mu)$
  has a positive eigenvalue under the given assumptions.
  
  To this end, fix some $\mu<0$, and set $K = -\mu \us \chi>0$. Let  
  \[
  B = M(\mu) + r I,
  \]
  where $r>0$ is chosen large enough so that~$B$ has positive
  diagonal entries.  \com{Note that while both $r$ and $K$ are dependent on $\mu$, $r$ does not depend directly on the value of $K$ (which may be changed by varying parameters other than $\mu$). Note also that $r$ is independent of $\valpha$.} Then since $J$ is Metzler, $B$ is a nonnegative
  matrix.

  In fact, $B$ is an irreducible nonnegative matrix. To see this,
  consider the directed graph of $J$ made of vertices $V_1,\ldots,
  V_N$. The graph $G(B)$ is obtained after augmenting $G(J)$ by a
  vertex corresponding to the first row and column, say $V_0$, and
  adding vertex loops as needed to account for the positive diagonal
  of $B$.  Since $J$ is irreducible, $G(J)$ is strongly connected.
  Moreover, since $\valpha \ne 0$, we conclude that 
  there is a path
  from $V_j$ to $V_0$ for any $j$. Similarly, since $K>0$, there is a
  path from $V_0$ to $V_j$ for any $j$. Hence, $G(B)$ is strongly
  connected and $B$ is irreducible.

  We now claim that 
  \begin{equation}
    \label{eq:claim}
    \lim_{K\to \infty} \rho(B) = \infty
    \quad\text{and}\quad
    \lim_{\alpha_{i_*} \to \infty} \rho(B) = \infty.
  \end{equation}
  The theorem follows from the claim.  Indeed, since $B$ is a
  nonnegative irreducible matrix, $\rho(B)$ is an eigenvalue of $B$ by
  Theorem~\ref{thm:perronfrob}. Hence, by~\eqref{eq:claim}, for large
  enough $K$, we can ensure that $\rho(B)>r$. But then, $\rho(B)-r$ is
  a positive eigenvalue of $M(\mu) = B - r I$. By the same reasoning,
  for large enough $\alpha_{i_*}$, the matrix $M(\mu)$ has a positive
  eigenvalue. Thus, in either case, the stationary state $(\us,\vs)$
  is unstable.

  In the remainder of this proof, we establish the
  claim~\eqref{eq:claim}. To this end, let us \com{denote by $B^t$ the transpose of the matrix $B$} and note that for any 
  integer~$m\ge 1$,
  \[
  \rho(B^t)^m = \rho( (B^t)^m ).
  \]
  Therefore, to prove that the first limit in~\eqref{eq:claim} is
  infinite, it suffices to prove that
  \begin{equation}
    \label{eq:5}
      \lim_{K\to \infty } \rho ((B^t)^m) = \infty 
  \end{equation}
  for some integer~$m\ge 1$, take the $m$th root, and use
  $\rho(B)=\rho(B^t)$.

  To prove~\eqref{eq:5}, we start by observing that the $(i,j)$th
  entry of $(B^t)^m$ is
  \begin{equation}\label{matmult}
    [(B^t)^m]_{ij} 
    = \sum_{i_1}\sum_{i_2}\cdots \sum_{i_{m-1}} [B^t]_{i, i_1} [B^t]_{i_1,i_2} \cdots [B^t]_{i_{m-1},j}
  \end{equation}
  where the sums run over the ranges of the matrix indices $i_1,\dots
  i_{m-1}$. We choose these ranges to be $0,1,\ldots N$ (instead of
  the customary $1,2,\ldots, N+1$) so as to match the indices of the
  vertices $V_0,\ldots, V_N$ of $G(B)$. Since $G(B^t)$ is strongly
  connected, there is a path from $V_i$ to $V_j$ for any $i$ and $j$
  of some length (number of connecting edges) $l$.  Since $B^t$ has
  positive diagonal entries, each vertex of $G(B^t)$ has a loop, and
  consequently, if there is a path of length $l$, then there is a path
  of any length longer than $l$ as well. Hence, there is a number $m$
  such that every two vertices in $G(B^t)$ are connected by a path of
  length~$m$. With this $m$, consider the $(i,0)$th entry of
  $(B^t)^m$, as given by~\eqref{matmult}. Since there is a path
  connecting $V_i$ to $V_0$, say $V_i\to V_{i_1} \to V_{i_2} \cdots
  V_{i_{m-1}} \to V_0$ with $i_{m-1} = N$, we have
  from~\eqref{matmult} that
  \[
    [(B^t)^m]_{i0} 
    \ge
    B_{i_1, i} B_{i_2, i_1} \cdots B_{0, N}
    =
    C_i(K) K,
 \]
 where $C_i(K) \equiv B_{i_1, i} B_{i_2, i_1} \cdots
 B_{i_{m-1},i_{m-2}}$ is a positive and nondecreasing function of~$K$
 (in fact a polynomial in $K$). Let $C(K)$ denote the minimum of such
 $C_i(K)$ for all $0\le i \le N$. Then, the
 minimal row sum $s(B)$ satisfies
 \[
 s( (B^t)^m ) \ge \min_{0\le i\le N}     [(B^t)^m]_{i0} \ge C(K) K,
 \]
 so by Theorem~\ref{thm:sradius}, 
 \[
 \rho( (B^t)^m ) \,\ge \,s( (B^t)^m )\, \ge\, C(K) K.
 \]
 Since $C(K)$ is a nondecreasing positive function of $K$, letting
 $K\to \infty$, we prove~\eqref{eq:5}.

 This proves that the first limit in~\eqref{eq:claim} is infinity. The
 proof that the second limit is also infinity follows in the same
 manner by finding, for each $0 \leq i \leq N$, a path of length $m$
 from $V_i$ to $V_{i^*}$.
\end{proof}

\subsection{Relaxing the irreducibility condition}

As in the previous proof, let $V_1,V_2,\ldots V_N$ be the vertices of
the $G(J)$.  One way to check the \com{second} condition (irreducibility) of
Theorem~\ref{thm:suff1} is to verify that there is a path from any
$V_i$ to $V_j$ for all $i\ne j$. In this subsection, we will give
another condition involving only one path that is often
easier to check.

Let us first recall some further terminology.  
A {\em strongly connected component}
of a graph is a maximal strongly connected subgraph. Note that a
strongly connected graph consists of exactly one strongly connected
component. We consider the following equivalence relation on vertices
of a graph: Two vertices $V_i$ and $V_j$ in a graph are equivalent if
there is a directed path from $V_i$ to $V_j$ \textit{and} a path from
$V_j$ to $V_i$. Then, the equivalence classes created by this
equivalence relation consist exactly of the vertices of the strongly
connected components of the graph. Keeping these in mind, recall the
following definition of~\cite{mmats}.

\begin{definition} 
  The {\em classes} of an $N \times N$ nonnegative matrix $A$ are the
  disjoint subsets $\{i_1,i_2,\ldots,i_k \} \subseteq
  \{1,2,\ldots,N\}$ corresponding to the vertices of the equivalence
  classes of~$G(A)$. We also identify a class with its strongly
  connected component.
\end{definition}

If $A$ is reducible, then  by reordering the vertices, a
permutation matrix $P$ can be found so that
\begin{equation}
  \label{eq:6}
  T = PAP^t
\end{equation}
is a lower block triangular matrix with the blocks consisting of the
classes of $A$ (see~\cite{mmats}).
Since the classes of $A$ are strongly connected, the diagonal blocks
of $T$ are irreducible. Using these facts we can drop the
irreducibility assumption in Theorem~\ref{thm:suff1} and replace it
with a weaker assumption as follows.

\begin{theorem}          \label{thm:suff2}
  Assume that the system~\eqref{eq:GKS} has a positive
  homogeneous steady state solution $(\us,\vs)$ and suppose that
  $J$ and $\valpha \ge 0$
  satisfy the following conditions:
\begin{enumerate}
\item \label{item:i*} There is an \com{$i_*$ with} $1 \leq i_* \leq N$ such that
  \begin{enumerate}
  \item $\alpha_{i_*} > 0$, and 
  \item a directed path from $V_{i_*}$ to $V_N$ in $G(J^t)$ exists.

  \end{enumerate}
\item \label{item:JMetzler} 
   $J$ is Metzler.

\end{enumerate}

Then, if either the product $\chi \us$ or $\alpha_{i_*}$ is
sufficiently large, $(\us,\vs)$ is linearly unstable.
\end{theorem}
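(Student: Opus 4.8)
The plan is to reduce Theorem~\ref{thm:suff2} to the already-proven Theorem~\ref{thm:suff1} by passing to a suitable strongly connected component of $G(J^t)$, i.e., to a diagonal block of the lower block triangular form $T = PJ^tP^t$. The point is that in the proof of Theorem~\ref{thm:suff1} we only ever needed the matrix $B = M(\mu) + rI$ (equivalently $B^t$) to be irreducible in order to invoke the Perron--Frobenius eigenvalue together with the row-sum bound of Theorem~\ref{thm:sradius}; here we will instead locate a single irreducible \emph{principal submatrix} of $B^t$ whose spectral radius can be driven to infinity, and then use the fact that the spectral radius of a nonnegative matrix is bounded below by that of any principal submatrix.

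Concretely, here are the steps. First, fix $\mu<0$, set $K = -\mu\us\chi>0$, and form $B = M(\mu)+rI \ge 0$ exactly as before, indexing rows and columns by $V_0,V_1,\ldots,V_N$, with $V_0$ the row/column carrying $\valpha$ (down the first column) and $\vkappat$ (along the first row, whose only nonzero entry $-\chi\us\mu = K$ sits in the $V_N$ slot). Second, consider $G(B^t)$ and isolate the set of vertices $W = \{\,V_j : \text{there is a path in }G(B^t)\text{ from }V_j\text{ to }V_0\,\}\cup\{V_0\}$. Using condition~\ref{item:i*}(b), namely the existence of a path from $V_{i_*}$ to $V_N$ in $G(J^t) \subseteq G(B^t)$, together with the edge $V_N \to V_0$ in $G(B^t)$ (present because the $(V_0,V_N)$ entry of $B$ equals $K>0$, hence the $(V_N,V_0)$ entry of $B^t$ is $K$), and the edge $V_0 \to V_{i_*}$ in $G(B^t)$ (present because $\alpha_{i_*}>0$ gives a nonzero $(i_*,0)$ entry of $B$, hence a nonzero $(0,i_*)$ entry of $B^t$), one sees that $V_0, V_{i_*}, V_N$ all lie in a common strongly connected component $\mathcal{C}$ of $G(B^t)$ (since $V_0 \to V_{i_*} \to \cdots \to V_N \to V_0$ is a closed walk through all three). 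Let $\hat B$ be the principal submatrix of $B^t$ on the index set $\mathcal{C}$; then $\hat B$ is irreducible and nonnegative, and it contains the row $V_0$ whose $(V_0,V_N)$ entry is $K$ as well as (in the relevant case) the entries along the path feeding into $\alpha_{i_*}$.

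Third, I would rerun the combinatorial argument from the proof of Theorem~\ref{thm:suff1} \emph{within} $\mathcal{C}$: since $\hat B$ is irreducible with positive diagonal, there is an $m$ so that any two vertices of $G(\hat B)$ are joined by a path of length exactly $m$; looking at the $(i,V_0)$ entries of $\hat B^m$ and using that some path of length $m$ from $V_i$ to $V_0$ must traverse the edge $V_N\to V_0$ of weight $K$, we get $s(\hat B^m)\ge C(K)K$ with $C(K)$ a positive nondecreasing polynomial in $K$, whence $\rho(\hat B^m)\ge C(K)K \to\infty$ as $K\to\infty$, so $\rho(\hat B)\to\infty$. The analogous argument with paths terminating at $V_{i_*}$ (using $\alpha_{i_*}$ as the large weight) handles the $\alpha_{i_*}\to\infty$ case. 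Finally, because $\hat B$ is a principal submatrix of the nonnegative matrix $B^t$, we have $\rho(B) = \rho(B^t)\ge\rho(\hat B)$, so $\rho(B)\to\infty$ in each regime; then, exactly as in Theorem~\ref{thm:suff1}, for large enough $K$ (or $\alpha_{i_*}$) we get $\rho(B)>r$, so $\rho(B)-r>0$ is a positive eigenvalue of $M(\mu)=B-rI$, and by Theorem~\ref{thm:reduction} (applied with $\mu=\mu_i<0$ for $i\ge 1$) the steady state $(\us,\vs)$ is linearly unstable.

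The main obstacle, and the only place where real care is needed, is the bookkeeping that ensures the distinguished vertex $V_0$, the production index $V_{i_*}$, and the chemotaxis index $V_N$ all land in the \emph{same} strongly connected component $\mathcal{C}$ of $G(B^t)$ — without that, the large-entry edge ($K$ in one case, $\alpha_{i_*}$ in the other) might not lie on any length-$m$ closed-walk structure inside an irreducible block, and the row-sum lower bound would fail. This is precisely where the hypothesis~\ref{item:i*}(b) (a path $V_{i_*}\to V_N$ in $G(J^t)$) is doing its work, replacing the global strong connectivity of $G(J)$ used in Theorem~\ref{thm:suff1}. One should also note explicitly, as the earlier remark in the text does, that $r$ can be chosen independently of $K$ and of $\valpha$, so that the limits $K\to\infty$ and $\alpha_{i_*}\to\infty$ are taken with $r$ fixed; this is what makes ``$\rho(B)>r$ for large $K$'' a legitimate conclusion.
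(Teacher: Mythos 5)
Your proposal is correct and follows essentially the same route as the paper: identify the cycle $V_0 \to V_{i_*} \to \cdots \to V_N \to V_0$ in $G(B^t)$, pass to the strongly connected component containing these vertices, and rerun the row-sum/spectral-radius growth argument from Theorem~\ref{thm:suff1} on the resulting irreducible nonnegative block. The only (cosmetic) difference is that you work with the principal submatrix $\hat B$ and the monotonicity $\rho(B^t)\ge\rho(\hat B)$ together with the fact that $\rho(B)$ is an eigenvalue of the nonnegative matrix $B$, whereas the paper permutes $B^t$ into block triangular form and reads off the eigenvalue of the diagonal block directly as an eigenvalue of $B$.
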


\begin{proof}
  As in the proof of Theorem~\ref{thm:suff1}, fix {$\mu<0$}, choose
  $r$ large enough so that $B = M(\mu) + r I$ has positive diagonal
  entries, and consider the graph $G(B^t)$ with vertices
  $V_0,V_1,\ldots, V_N$.

  We proceed by identifying a cycle in the graph.  Since
  $[B^t]_{0,i_*}=\alpha_{i_*}>0$, there is an edge from $V_0$ to
  $V_{i_*}$. Also, by our assumption, there is a path from $V_{i_*}$
  to $V_N$.  Moreover, since $[B^t]_{N0} = K > 0$ there is an edge
  from $V_N$ to $V_0$, thus completing a cycle. In particular, we have
  shown that $V_0, V_{i_*},$ and $V_N$ are part of the same strongly
  connected component in $G(B^t)$.

  Therefore, finding a permutation matrix $P$ as in~\eqref{eq:6}, we
  conclude that there is a triangular matrix $ T^t = P B^t P^t $ with
  a diagonal block $T_j$ corresponding to the strongly connected
  component containing $V_0,V_{i_*},$ and $V_N$. Due to the strong
  connectivity, $T_j$ is irreducible. Now, we can prove that 
  \[
  \lim_{K\to \infty} \rho(T_j) = \infty
  \quad\text{and}\quad
  \lim_{\alpha_{i_*} \to \infty} \rho(T_j) = \infty,
  \]
  by repeating the arguments in the proof of~\eqref{eq:claim}, because
  $T_j$ is a nonnegative irreducible matrix with a structure similar
  to the matrix in~\eqref{eq:claim}.

  Thus, for sufficiently large $K$ or $\alpha_{i_*}$, the triangular
  matrix $T$ has an eigenvalue larger than $r$, and so does $B$.
  Hence $M(\mu) = B - rI $ has a positive eigenvalue.
\end{proof}

\section{Examples}   \label{sec:examples}

In this section, we present several examples of how to use the theory
developed in the previous sections. We consider the
system~\eqref{eq:GKS} with a $\vg(\vv)$ that describes the kinetics of
some CRN linking the chemicals $v_1, v_2,\ldots, v_N$. In all the
examples, $\vg$ is obtained by the laws of mass action kinetics.

\begin{example}[\com{(The linear case)} ]
  Suppose $\vg(\vv)$ is linear, i.e.,
  \begin{equation}
    \label{eq:7}
      \vg(\vv) = A \vv
  \end{equation}
  for some $N\times N$ matrix $A$.
  This limits the network to reactions
  of the form $ v_i \to v_j$ (``$v_i$ produces $v_j$''), or $ v_i \to
  \emptyset,$ (``$v_i$ decays''), but these component reactions can be
  combined arbitrarily.  

  Let us first observe a consequence of mass action kinetics. The
  reaction $v_i \rightarrow v_j$ with a rate constant $k_{ji}$ gives
  rise to an off-diagonal entry $k_{ji}$ in the matrix $A$. Since the
  rate constants are nonnegative, this implies that $A$ must be a Metzler
  matrix.

  If, in addition, we know that $-A$ is a nonsingular $M$-matrix, then
  Proposition~\ref{prop:linear} can be applied to conclude the
  existence of nonnegative steady states.

  To investigate stability, we proceed by studying the paths in the
  CRN and applying our previous results, as exemplified next.

\begin{corollary} \label{cor:example} 
  Assume that~\eqref{eq:GKS} has a positive homogeneous steady state
  solution $(\us,\vs)$.  Assume that the kinetics of a CRN on the
  chemicals $v_1,v_2,...,v_N$ is described by a linear function
  $\vg(\vv)$ which is obtained by the law of mass action kinetics.
  If, for some chemical~$v_{i_*}$, there is a path in the CRN from
  $v_{i_*}$ to $v_N$, and $\alpha_{i_*}>0$, then $(\us,\vs)$ is
  linearly unstable whenever $\chi \us$ or $\alpha_{i_*}$ is
  sufficiently large.
\end{corollary}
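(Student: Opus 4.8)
The plan is to derive Corollary~\ref{cor:example} as an almost immediate consequence of Theorem~\ref{thm:suff2}, once we translate the hypotheses about the CRN into the matrix-theoretic hypotheses about $J$. The key observation, already noted in the preceding paragraph of the example, is that when $\vg(\vv) = A\vv$ is obtained from mass action kinetics, the off-diagonal entries of $A$ are sums of nonnegative rate constants, so $A$ is Metzler; and since $\vg$ is linear, its \Frechet\ derivative is $J = A$ itself, independent of the steady state $(\us,\vs)$. Hence condition~(\ref{item:JMetzler}) of Theorem~\ref{thm:suff2} holds automatically. So the first step is just to record: $J = A$ is Metzler.

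The second step is to match the path hypothesis. The corollary assumes a path in the CRN from $v_{i_*}$ to $v_N$. I need to check that this yields a directed path from $V_{i_*}$ to $V_N$ in $G(J^t)$, as required by condition~(\ref{item:i*})(b) of Theorem~\ref{thm:suff2}. A reaction $v_i \to v_j$ contributes the rate constant $k_{ji}$ to the $(j,i)$ entry of $A = J$, hence a nonzero $(i,j)$ entry of $J^t$, which is precisely an edge from $V_i$ to $V_j$ in $G(J^t)$. So a CRN path $v_{i_*} \to \cdots \to v_N$ (a sequence of reactions) translates edge-by-edge into a directed path $V_{i_*} \to \cdots \to V_N$ in $G(J^t)$. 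This, together with the hypothesis $\alpha_{i_*} > 0$, gives condition~(\ref{item:i*}) of Theorem~\ref{thm:suff2}.

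With both hypotheses of Theorem~\ref{thm:suff2} verified (and the standing assumption that a positive homogeneous steady state exists carried over directly), the conclusion --- that $(\us,\vs)$ is linearly unstable whenever $\chi\us$ or $\alpha_{i_*}$ is sufficiently large --- follows immediately by invoking that theorem. So the proof is essentially a two-line dictionary argument followed by a citation.

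The only point requiring a modicum of care --- and the closest thing to an obstacle --- is the precise correspondence between ``a path in the CRN'' and a directed path in $G(J^t)$, in particular getting the transpose direction right: the CRN arrow $v_{i_*}\to v_N$ must produce an edge \emph{into} the graph used by Theorem~\ref{thm:suff2}, which is $G(J^t)$ and not $G(J)$, because the off-diagonal entry sits in position $(N, i_*)$ of $J$. Once one is careful that the reaction $v_i\to v_j$ gives a nonzero $J_{ji}$ hence an edge $V_i\to V_j$ in $G(J^t)$, the directions line up and the composition of reactions in the CRN path maps to a genuine directed path in $G(J^t)$. I would also remark (optionally) that if, in addition, $-A$ is a nonsingular $M$-matrix, then Proposition~\ref{prop:linear} supplies the required positive steady state, so the existence hypothesis in the corollary is not vacuous.
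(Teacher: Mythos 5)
Your proposal is correct and follows essentially the same route as the paper: verify the two hypotheses of Theorem~\ref{thm:suff2} by noting that mass action kinetics makes $J=A$ Metzler and that the CRN graph (up to diagonal loops) is exactly $G(A^t)=G(J^t)$, so the assumed CRN path gives the required path from $V_{i_*}$ to $V_N$. Your care with the transpose (reaction $v_i\to v_j$ giving a nonzero $(j,i)$ entry of $J$, hence an edge $V_i\to V_j$ in $G(J^t)$) matches the paper's identification precisely.
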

\begin{proof}
  We only need to verify the conditions of Theorem~\ref{thm:suff2}.
  View the CRN as a directed graph on vertices $V_1,V_2,\ldots ,V_N$
  with an edge from $V_i$ to $V_j$ if and only if there is a reaction
  producing the chemical $v_j$ from the chemical $v_i$.  This graph,
  except possibly for diagonal loops, is exactly $G(A^t)$, where $A$
  is as in~\eqref{eq:7}.  The linearity of $\vg$ also implies that the
  Jacobian $J$ in~\eqref{eq:J} coincides with $A$.  Hence, the given
  path condition verifies Condition~(\ref{item:i*}) of
  Theorem~\ref{thm:suff2}. We have already seen above that the
  assumptions of mass action kinetics imply that $J=A$ is Metzler,
  thus verifying Condition~(\ref{item:JMetzler}) of
  Theorem~\ref{thm:suff2}.
\end{proof}
\end{example}

\begin{example}[\com{(Dimerization and decay) }]
  One of the simplest chemical reactions is {\em
    dimerization}, or the reaction
  \begin{equation}\label{dime}
    2v_1 \longleftrightarrow v_2.
  \end{equation}
  Additionally, suppose the chemicals $v_1$ and $v_2$ decay at rates
  $\gamma_1 \ge 0$ and $\gamma_2 > 0$, respectively. Let the rate
  constant for the forward reaction $2v_1 \rightarrow v_2$ be $k_1
  > 0$, and of the reverse reaction be $k_2 \geq 0$.  Then,
  according to the law of mass action kinetics, the function $\vec{g}$
  describing the CRN is
  \[
  \vec{g}(\vec{v})
  = \begin{bmatrix} -k_1v_1^2 + k_2 v_2 -\gamma_1 v_1\\  
    k_1v_1^2-k_2v_2 - \gamma_2v_2
  \end{bmatrix}.
  \]
  Finally, assume that a species of density~$u$ produces the
  chemical~$v_1$ at a rate of $\alpha_1>0$, but does not produce $v_2$ ($\alpha_2=0$)
  and consider~\eqref{eq:GKS} in this setting.
  
  Recall from Section~\ref{sec:homog} that
  homogeneous steady states are solutions 
  of~\eqref{eq:ss_condition}, namely,
  \begin{subequations}\label{eq:ss_ex}
  \begin{align} \label{ss_exa}
     \alpha_1 u -k_1v_1^2 + k_2 v_2 -\gamma_1 v_1 
  & = 0, \\
  \label{ss_exb}
   k_1v_1^2-(k_2+\gamma_2)v_2& = 0.
  \end{align}
  \end{subequations}
  Let us verify that this system admits positive steady states. 
  Note that upon adding them, the equations of~\eqref{eq:ss_ex} are equivalent to:
  \begin{subequations}\label{eq:ss_ex2}
  \begin{align} \label{ss_ex2a}
   \alpha_1 u&= \gamma_1 v_1 + \gamma_2 v_2, \\
  \label{ss_ex2b}
   v_2& = \frac{k_1}{k_2+\gamma_2}v_1^2.
  \end{align}
  \end{subequations}
  
  Eliminating $v_2$ from~\eqref{ss_ex2a}
  using~\eqref{ss_ex2b}, we find that $v_1$ satisfies
  \begin{equation}
    \label{eq:9}
    \frac{k_1\gamma_2}{k_2+\gamma_2}v_1^2+\gamma_1v_1-\alpha_1u=0,
  \end{equation}
  a quadratic equation which has a unique positive root whenever
  $u>0$. Letting $r(u)$ denote this positive root, we find the family of
  homogeneous stationary states for this example:
  \[
  u>0 \text{ (arbitrary)}, \quad
  v_1 = r(u)>0, \quad
  v_2 = \frac{k_1}{k_2+\gamma_2}r^2(u)>0.
  \]

  We next turn to determining stability for this example. Clearly
  $\vec{g}$ is not linear, so we cannot apply
  Corollary~\ref{cor:example}. However, we can apply
  Theorem~\ref{thm:suff2}.  The Jacobian of $\vec{g}$ evaluated at a
  positive homogeneous steady state $(\us,\vs)$ is 
  \[ J = \begin{bmatrix}
    -2k_1 \vsii  1 - \gamma_1 & k_2 \\
    2k_1  \vsii 1 & -k_2 - \gamma_2
  \end{bmatrix}
  \] 
  where $\vsii 1$ is the first component of $\vs$.
  Then $J$ is a Metzler matrix with a path in $G(J^t)$ from $v_1$ to
  $v_2$ since $2k_1 \vsii 1 > 0$.  Hence Theorem~\ref{thm:suff2}
  implies that any positive homogeneous steady state $(\us,\vs)$
  of~\eqref{eq:GKS}, with the above $\vg$, can be destabilized for
  large enough values of $\chi \us$ or $\alpha_1$.

  Notice that the Metzler matrix $J$ above has two negative
  eigenvalues (since the trace of $J$ is negative, and the determinant is positive).
  Therefore, the CRN by itself (without the introduction of
  chemotaxis) is stable.  However, once we introduce chemotaxis, the
  arguments above show that with a high enough chemotactic sensitivity
  ($\chi \us$), or a high enough chemical production by the species
  ($\alpha_1$), the stable system can be destabilized.
\end{example}

\begin{example}
  Now we give an example where a hypothesis of Theorems~\ref{thm:suff1} and~\ref{thm:suff2} fails.
  Nonetheless, we can analyze the system by applying 
  Theorem~\ref{thm:reduction}.

  Consider a reaction of the form
  \[
  v_1 + v_2 \,\longleftrightarrow\; v_3
  \]
  where the rate constants of the foward and backward reactions are 
  given by $k_1>0$ and $k_2>0$ respectively. Again,
  assume that a species of concentration $u$ produces only the
  chemical~$v_1$ at a rate $\alpha_1 > 0$ and assume decay rates 
  $\gamma_i$ and production rates $\alpha_i$ of chemical $v_i$ satisfy
$$
\gamma_1>0,\;\; \gamma_2=\gamma_3=0\textrm{ and } \alpha_1=\alpha>0,\;\; \alpha_2=\alpha_3=0.
$$  
  Then the function $\vec{g}$ describing the
  reaction network is
\[
\vec{g}(\vec{v}) = \begin{bmatrix}
-k_1v_1 v_2 +k_2 v_3 -\gamma_1 v_1\\
-k_1v_1v_2 + k_2 v_3 \\
k_1 v_1  v_2 - k_2 v_3
\end{bmatrix}.
\]

To verify that positive steady states exist, we 
look for $u>0$ and $\vv>0$ satisfying
\begin{subequations}\label{eq:ex2ss}
\begin{align} \label{ex2ss1}
-k_1v_1 v_2 +k_2 v_3 -\gamma_1 v_1 + \alpha  u &= 0, \\
\label{ex2ss2}
-k_1v_1v_2 + k_2 v_3 &= 0.
\end{align}
\end{subequations}
Notice that the third equation of $g(v) + \vec{\alpha} u=\vec 0$ 
is omitted because it is the same as the second up to the sign. 
By subtracting~\eqref{ex2ss1} 
from~\eqref{ex2ss2} we obtain the equivalent system
\begin{subequations}\label{eq:ex2simpss}
\begin{align}
\gamma_1 v_1 &= \alpha u,\\
k_1v_1v_2 &= k_2v_3.
\end{align}
\end{subequations}
From these equations it is clear that
\[
u > 0 \text{ (arbitrary)}, \quad
v_1 = \frac{\alpha}{\gamma_1} u, \quad
v_2 >0 \text{ (arbitrary)}, \quad
v_3 = \frac{\alpha k_1}{\gamma_1 k_2} u v_2.
\]
is a positive solution of~\eqref{eq:ex2ss}.

Having verified that positive steady states exist,
 we turn to determine their stability. 
The Jacobian of the function $\vec{g}$ evaluated at a positive homogeneous 
steady state $(\us,\vs)$ is
\[ 
J = \begin{bmatrix}
-k_1v_{*,2} - \gamma_1 & -k_{1} v_{*,1} & k_{2} \\
-k_1v_{*,2} & -k_1 v_{*,1}& k_2 \\
k_1v_{*,2} & k_1 v_{*,1} & -k_2
\end{bmatrix}.
\]
Since $J$ is not Metzler ($J_{12},J_{21}<0$), 
a hypothesis of Theorems~\ref{thm:suff1} and~\ref{thm:suff2} fails.

However, by virtue of Theorem~\ref{thm:reduction}, the linearized
stability of steady states is determined by the eigenvalues of the
matrix
\[ 
M(\mu) = 
\begin{bmatrix}
\mu D& 0 & 0 & -\chi \us \mu \\
\alpha_1 & -k_1 v_{*,2} + \mu \tilde{D}_1 - \gamma_1 & -k_1 v_{*,1} & k_2 \\
0 & -k_1 v_{*,2}  & -k_1v_{*,1} + \mu \tilde{D}_2  & k_2 \\
0 & k_1 v_{*,2} & k_1 v_{*,1} & -k_2 + \mu \tilde{D}_3
\end{bmatrix}
\]
as $\mu$ varies over the spectrum of the Laplace operator.
Fix any $\mu < 0$ and let $K = -\chi \us  \mu.$

First consider the case $K=0$, when there is no chemotactic term.
Then, $M(\mu)$ is block triangular and its eigenvalues are $\mu D<0$
and those of its next diagonal block, which can be expressed as
$$
 \begin{bmatrix}
 - a-d_1 & -b & c \\
 -a& -b-d_2  & c \\
 a & b& -c-d_3 
\end{bmatrix}
$$
using the positive numbers $a=k_1 v_{*,2}, b=k_1 v_{*,1}, c=k_2,
d_1=\gamma_1 - \mu \tilde D_1, d_2=-\mu\tilde D_2,$ and $d_3=-\mu
\tilde D_3.$ Its characteristic equation is
\begin{eqnarray*}
\lambda^3+b_2\lambda^2+b_1\lambda+b_0:=
&&\lambda^3+(a+b+c+d_1+d_2+d_3)\lambda^2\\
&&+\left( a(d_2+d_3)+b(d_1+d_3)+c(d_1+d_2)+d_1d_2+d_1d_3+d_2d_3\right)\lambda \\
&& + (ad_2d_3+bd_1d_3+cd_1d_2+d_1d_2d_3)=0
\end{eqnarray*}
We apply the Routh-Hurwitz criterion~\cite{Gantm59}, which says that
all solutions of the characteristic equation have negative real part
if and only if the following inequalities hold:
$$
b_2>0,\;\; b_0>0\textrm{ and }b_1b_2-b_0>0.
$$
The first two are immediate. The third can be verified after some
calculations: every term in the product $b_1b_2$ is positive, and
every term in $b_0$ cancels some term (but not all terms) in
$b_1b_2$. Hence for any $\mu < 0$ all eigenvalues of $M(\mu)$ have
negative real part.

We next turn to the case where $K >0$. Using cofactor expansion along
the first row, we can compute $\det M(\mu)$ to be
\[
C - K 
  \det 
  \begin{bmatrix}
    \alpha_1 & -k_1 \vsii{2} + \mu \tilde{D}_1 - \gamma_1 & -k_1\vsii{1}\\
    0 & -k_1 \vsii{2}  & -k_1\vsii{1} + \mu \tilde{D}_2\\
    0 & k_1 \vsii{2} & k_1 \vsii{1}
  \end{bmatrix}
\]
where $ C= -\mu D b_0$ is the determinant of the matrix $M(\mu)$ in
the case $K=0$. Clearly $C>0$ is independent of $K$.
Simplifying and identifying the $K$ dependence, 
\[
\det M(\mu)  = C + K \alpha_1 k_1\vsii{2} \mu \tilde{D}_2
\]
where $ K \alpha_1 k_1\vsii{2} \mu \tilde{D}_2 < 0$.  Hence there is a
$K_0>0$ such that $\det M(\mu) <0$ for all $K \ge K_0$.

To conclude, we claim that $M(\mu)$ must have a negative eigenvalue
when $K\ge K_0$. If all eigenvalues of $M(\mu)$ are real, the claim is
obvious.  If not, complex eigenvalues occur in conjugate pairs, so in
order that their product is negative, at least two of the four
eigenvalues must be real.  Furthermore, these two real eigenvalues
must have opposite signs to satisfy $\det M(\mu)<0$.  

Therefore, for all sufficiently large values of~$K$, the steady state
is linearly unstable.
\end{example}

\end{document}